\pgfplotsset{compat=1.18}
\definecolor{mycolour}{rgb}{0.7,0.7,0.7}
\newtheorem{thm}{Theorem}
\newtheorem*{thm*}{Theorem}
\newtheorem*{conj*}{Conjecture}
\newtheorem{prop}{Proposition}
\title[L\"uroth Expansions in Diophantine Approximation]{L\"uroth Expansions in Diophantine Approximation: \\ Metric Properties and Conjectures}
\author{Ying Wai Lee}
\date{\today}
\begin{document}
\begin{abstract}
    This paper focuses on the metric properties of L\"uroth well approximable numbers, studying analogous of classical results, namely the Khintchine Theorem, the Jarn\'ik--Besicovitch Theorem, and the result of Dodson. A supplementary proof is provided for a measure-theoretic statement originally proposed by Tan--Zhou. The Beresnevich--Velani Mass Transference Principle is applied to extend a dimensional result of Cao--Wu--Zhang. A counterexample is constructed, leading to a revision of a conjecture by Tan--Zhou concerning dimension, along with a partial result.
\end{abstract}

\maketitle

\section{Introduction}
The metric theory of Diophantine approximation studies the Lebesgue measure and Hausdorff dimension of subsets of real numbers satisfying specific approximation properties.
Let $\psi:\mathbb{N}\to[0,1/2]$. Define $W(\psi)$, the set of all $\psi$-well approximable numbers, as
\begin{align}
\label{eq: W psi}
    W(\psi)
    \coloneqq\limsup_{q\to+\infty}\bigcup_{p\in\mathbb{Z}}\left\{x\in[0,1):\left|x-\frac{p}{q}\right|<\frac{\psi(q)}{q}\right\}.
\end{align}
Equivalently, a number $x\in[0,1)$ is $\psi$-well approximable if and only if there exist infinitely many $q\in\mathbb{N}$ such that for some $p\in\mathbb{Z}$,
\begin{align}
    \label{eq: Khinchine ineq}
    \left|x-\frac{p}{q}\right|<\frac{\psi(q)}{q}.
\end{align}

The Khintchine Theorem is one of the fundamental results in the subject, and serves as a starting point for various research. The theorem states that if $\psi$ is non-increasing, then the Lebesgue measure of the set $W(\psi)$ satisfies the zero-one law, and is determined by the convergence or divergence of the series $\sum_{q=1}^\infty\psi(q)$. Specifically, if the series converges, then almost every $x\in[0,1)$ is not $\psi$-well approximable; conversely, if the series diverges, then almost every $x\in[0,1)$ is $\psi$-well approximable.
\begin{thm*}[Khinchine \cite{Khintchine1924}, 1924]
Let $\psi:\mathbb{N}\to[0,1/2]$. Suppose $\psi$ is non-increasing. Then
\begin{align*}
    \mathcal{L}(W(\psi))
    =
    \begin{dcases}
        0,& \text{if}\quad\sum\nolimits_{q=1}^\infty \psi(q)<+\infty; \\
        1,& \text{if}\quad\sum\nolimits_{q=1}^\infty \psi(q)=+\infty; \\
    \end{dcases}
\end{align*}
where $\mathcal{L}$ denotes the Lebesgue measure on $\mathbb{R}$.
\end{thm*}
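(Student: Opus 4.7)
The plan is to write $W(\psi) = \limsup_{q \to \infty} A_q$, where
\[
    A_q \coloneqq \bigcup_{p = 0}^{q} \left(\frac{p}{q} - \frac{\psi(q)}{q},\, \frac{p}{q} + \frac{\psi(q)}{q}\right) \cap [0,1),
\]
and then handle the two directions separately. For the convergence case, the hypothesis $\psi(q) \leq 1/2$ forces the component intervals comprising $A_q$ to be pairwise disjoint, yielding $\mathcal{L}(A_q) \leq 2\psi(q) + O(1/q)$. The classical Borel--Cantelli lemma then immediately gives $\mathcal{L}(W(\psi)) = 0$ whenever $\sum_q \psi(q)$ converges; note that monotonicity of $\psi$ is not needed for this half.

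For the divergence direction, I would proceed in two stages: first establish $\mathcal{L}(W(\psi)) > 0$, then promote this to full measure via a zero--one law. The second stage is the easier of the two: the set $W(\psi)$ is invariant (modulo null sets) under each translation $x \mapsto x + a/b \pmod 1$ with $a/b \in \mathbb{Q}$, and a Lebesgue density or ergodic-theoretic argument shows that a positive-measure subset of $[0,1)$ with such rich translation invariance must have full measure. Alternatively, one can invoke the zero--one laws of Cassels or Gallagher directly.

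The main obstacle is showing $\mathcal{L}(W(\psi)) > 0$, which I would obtain from the divergence half of Borel--Cantelli via the Paley--Zygmund inequality. Concretely, it suffices to establish the quasi-independence estimate
\[
    \sum_{q, r \leq Q} \mathcal{L}(A_q \cap A_r) \;\leq\; C \,\Bigl(\sum_{q \leq Q} \mathcal{L}(A_q)\Bigr)^2
\]
for some absolute $C > 0$ and all sufficiently large $Q$. To bound $\mathcal{L}(A_q \cap A_r)$, I would observe that the intersection consists of small neighbourhoods of rationals whose denominators divide $\mathrm{lcm}(q,r)$, so the overlap is governed by $\gcd(q, r)$; summing the resulting bounds over divisor classes and exploiting the monotonicity of $\psi$ to replace it by a comparable slowly-varying quantity should yield the desired estimate. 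This is the technically demanding step, and it is precisely here that monotonicity is indispensable: without it, Duffin--Schaeffer-type obstructions prevent a clean quasi-independence bound and force one to work under additional coprimality restrictions.
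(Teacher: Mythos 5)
The paper does not prove this theorem: it is quoted as classical background with a citation to Khintchine's 1924 paper, so there is no in-paper argument to compare your proposal against. On its own terms, your outline is the standard two-part route (Borel--Cantelli for convergence; second Borel--Cantelli/Paley--Zygmund plus quasi-independence plus a zero--one law for divergence) and is structurally sound, but two points need care. In the convergence half, the error term in $\mathcal{L}(A_q)$ should be $O(\psi(q)/q)$, coming from having $q+1$ rather than $q$ intervals; writing it as $O(1/q)$ is misleading since $\sum 1/q$ diverges and would appear to spoil Borel--Cantelli. The clean bound is $\mathcal{L}(A_q)\le (q+1)\cdot 2\psi(q)/q\le 4\psi(q)$, which is summable. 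In the divergence half, the claimed invariance of $W(\psi)$ modulo null sets under $x\mapsto x+a/b \pmod 1$ does not hold in the naive form you describe: translating by $a/b$ turns an approximant $p/q$ into one of denominator $qb$, and since $\psi$ is non-increasing the admissible error $\psi(qb)/(qb)$ is strictly smaller than $\psi(q)/q$ for $b\ge 2$, so $x\in W(\psi)$ does not directly give $x+a/b\in W(\psi)$. The zero--one laws of Cassels and Gallagher that you also cite are correct and are the right tool, but their proofs are more delicate than rational translation invariance, so you should invoke them directly rather than the heuristic. Finally, the quasi-independence estimate $\sum_{q,r\le Q}\mathcal{L}(A_q\cap A_r)\le C\bigl(\sum_{q\le Q}\mathcal{L}(A_q)\bigr)^2$ is indeed where all the work lies; your sketch correctly locates the role of $\gcd(q,r)$ and of monotonicity, but as written this step remains an acknowledged gap rather than a proof.
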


Let $\tau\geq0$. Define $W(\tau)$, the set of all $\tau$-well approximable numbers, as
\begin{align}
\label{eq: W tau}
    W(\tau)
    \coloneqq\limsup_{q\to+\infty}\bigcup_{p\in\mathbb{Z}}\left\{x\in[0,1):\left|x-\frac{p}{q}\right|<\frac{1}{q^{1+\tau}}\right\}.
\end{align}
$W(\tau)$ coincides precisely with $W(\psi)$ when $\psi$ is chosen as $\psi(q)\coloneqq1/q^\tau$ for any $q\in\mathbb{N}$. The Jarn\'ik--Besicovitch Theorem, established independently by Jarn\'ik and Besicovitch, is fundamental result concerning the Hausdorff dimension of $W(\tau)$.
\begin{thm*}[Jarn\'ik \cite{Jarnik1929metrischen}, 1928; Besicovitch \cite{besicovitch1934sets}, 1934]\label{thm: DL Jarn\'ik-Besicovitch}
For any $\tau\geq1$,
\begin{align*}
    \dim{W(\tau)}=\frac{2}{1+\tau}.
\end{align*}
\end{thm*}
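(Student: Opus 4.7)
The bounds $\dim W(\tau) \leq 2/(1+\tau)$ and $\dim W(\tau) \geq 2/(1+\tau)$ would be handled separately. The upper bound is a routine covering argument: for every $Q \in \mathbb{N}$,
\begin{align*}
    W(\tau) \subseteq \bigcup_{q \geq Q} \bigcup_{p=0}^{q} B\!\left(\frac{p}{q},\, \frac{1}{q^{1+\tau}}\right),
\end{align*}
so the associated Hausdorff $s$-sum satisfies
\begin{align*}
    \sum_{q \geq Q}(q+1)\left(\frac{2}{q^{1+\tau}}\right)^{s}
    \ll \sum_{q \geq Q} q^{1-s(1+\tau)},
\end{align*}
which converges precisely when $s > 2/(1+\tau)$. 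Letting $Q \to \infty$ forces $\mathcal{H}^{s}(W(\tau)) = 0$ for every such $s$, whence $\dim W(\tau) \leq 2/(1+\tau)$.

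For the lower bound, the natural route --- consistent with the framework advertised in the abstract --- is the Beresnevich--Velani Mass Transference Principle. Dirichlet's theorem supplies $\mathcal{L}(W(1)) = 1$, and $W(1)$ is the $\limsup$ of the balls $B(p/q, 1/q^{2})$. Set $s \coloneqq 2/(1+\tau) \in (0, 1]$, so that $(1/q^{2})^{1/s} = 1/q^{1+\tau}$. The shrunk balls $B(p/q, 1/q^{1+\tau})$ thus fit the MTP ansatz in ambient dimension $k = 1$, and their $\limsup$ is precisely $W(\tau)$. Since the full-measure hypothesis for the original balls holds on every ball $B \subseteq [0,1)$, the MTP concludes $\mathcal{H}^{s}(W(\tau) \cap B) = \mathcal{H}^{s}(B) = \infty$, whence $\dim W(\tau) \geq 2/(1+\tau)$.

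The main obstacle is the lower bound. One delicate point is that the MTP requires the full-measure hypothesis on every ball, not merely globally on $[0,1)$; this follows from the density of the approximating rationals $p/q$ but warrants explicit verification. If one wishes to avoid the MTP altogether, the classical route replaces it by an explicit Cantor-type construction of a subset of $W(\tau)$ together with the mass distribution principle, which demands careful bookkeeping of the separation and diameter of the approximating intervals at each level --- precisely the combinatorics that the MTP encapsulates once and for all.
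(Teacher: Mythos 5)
The paper does not prove the Jarn\'ik--Besicovitch Theorem; it is quoted as a classical background result (attributed to Jarn\'ik and Besicovitch) with no proof supplied. So there is no ``paper's proof'' to compare against directly. That said, your sketch is correct, and it closely mirrors the strategy the paper \emph{does} use for its L\"uroth analogue (Theorem~\ref{thm: DL 2}): a covering argument for the upper bound (compare Proposition~\ref{prop: dim L tau upper}) and the Beresnevich--Velani Mass Transference Principle for the lower bound (compare Propositions~\ref{prop: AMTP} and \ref{prop: dim L tau lower}, where the paper ``blows up'' the L\"uroth intervals by the exponent $1/(1+\tau)$ exactly as you blow up the balls $B(p/q, q^{-(1+\tau)})$ by $1/s$). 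Two small remarks. First, the ``delicate point'' you flag is not actually delicate: once $W(1)$ has full Lebesgue measure in $[0,1)$, its complement is a null set, so $\mathcal{L}(B\cap W(1))=\mathcal{L}(B)$ for every ball $B\subseteq[0,1)$ automatically --- no separate density argument is needed. Second, the $\sum_{q\geq Q}q^{1-s(1+\tau)}$ tail in the upper bound converges when $s(1+\tau)>2$, i.e.\ $s>2/(1+\tau)$, as you state, giving $\mathcal{H}^s(W(\tau))=0$ for all such $s$ and hence the upper bound on the dimension. Overall the proposal is sound and is the now-standard modern route (the original Jarn\'ik/Besicovitch arguments predate the MTP and use explicit Cantor constructions, as you note in your closing paragraph).
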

The Jarn\'ik--Besicovitch Theorem is generalised by the result of Dodson \cite[Theorem 2]{Dodson1992}, extending the theorem from $W(\tau)$ to $W(\psi)$ for general non-increasing $\psi$. The Hausdorff dimension of $W(\psi)$ is expressed in terms of the lower order at infinity of $1/\psi$, an asymptotic behaviour of $\psi$.
\begin{thm*}[Dodson {\cite[Theorem 2]{Dodson1992}}, 1992]
Let $\psi:\mathbb{N}\to[0,1/2]$. Suppose $\psi$ is non-increasing. Then
\begin{align*}
    \dim{W(\psi)}=\frac{2}{1+\underline{\tau}_{\psi}},
\end{align*}
where $\underline{\tau}_{\psi}\coloneqq\liminf_{q\to+\infty}-\log{\psi(q)}/\log{q}\geq1$.
\end{thm*}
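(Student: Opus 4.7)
The plan is to split the equality into the two inequalities $\dim W(\psi) \leq 2/(1+\underline{\tau}_\psi)$ and $\dim W(\psi) \geq 2/(1+\underline{\tau}_\psi)$, in each case comparing $W(\psi)$ with a suitable $W(\tau)$ and then invoking the Jarn\'ik--Besicovitch Theorem.

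For the upper bound, fix $\epsilon > 0$ and set $\tau_1 \coloneqq \underline{\tau}_\psi - \epsilon$. By the definition of the lower order, $-\log \psi(q)/\log q > \tau_1$ and hence $\psi(q) < q^{-\tau_1}$ for all sufficiently large $q$, so every approximating ball $\{x : |x-p/q| < \psi(q)/q\}$ is contained in the ball of radius $q^{-(1+\tau_1)}$; consequently $W(\psi) \subseteq W(\tau_1)$ after discarding finitely many $q$. Jarn\'ik--Besicovitch then gives $\dim W(\psi) \leq 2/(1+\tau_1)$, and letting $\epsilon \to 0^+$ closes the upper bound.

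For the lower bound, fix $\epsilon > 0$ and set $\tau_2 \coloneqq \underline{\tau}_\psi + \epsilon$. The liminf property furnishes an infinite set $Q \subseteq \mathbb{N}$ with $\psi(q) > q^{-\tau_2}$ for every $q \in Q$, and hence
\begin{equation*}
W^* \coloneqq \limsup_{q \in Q,\, q \to \infty} \bigcup_{p\in\mathbb{Z}}\left\{x \in [0,1) : \left|x - \tfrac{p}{q}\right| < q^{-(1+\tau_2)}\right\} \subseteq W(\psi).
\end{equation*}
I would then extract from $Q$ a fast-growing subsequence $(q_{n_k})_k$, say with $q_{n_{k+1}} > q_{n_k}^{1+\tau_2}$, and carry out a standard Jarn\'ik-type Cantor construction within $W^*$: at stage $k$, each surviving parent interval is refined into the admissible children around the rationals $p/q_{n_{k+1}}$, of which there are order $q_{n_{k+1}} \cdot q_{n_k}^{-(1+\tau_2)}$ per parent. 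Spreading a natural probability measure on the limiting Cantor set and applying the Mass Distribution Principle yields $\dim W^* \geq 2/(1+\tau_2)$; letting $\epsilon \to 0^+$ finishes the proof.

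The main obstacle lies in the lower bound, where the usable denominators are confined to the infinite but potentially very sparse set $Q$; no lower bound on $\psi$ is available off $Q$, so divergent Khintchine-type arguments on the full sequence $\mathbb{N}$ cannot be invoked. The delicate step is choosing $(q_{n_k})_k$ fast enough that each parent interval genuinely contains admissible children, yet not so fast that the mass estimates become lossy — the classical tension in Jarn\'ik's construction, whose optimisation produces the sharp exponent $2/(1+\tau_2)$.
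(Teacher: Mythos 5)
The paper only quotes Dodson's theorem as background with a citation and does not prove it, so there is no internal argument to compare against; I take your sketch on its own terms. The upper bound is correct: $\psi(q)<q^{-\tau_1}$ for all large $q$ gives $W(\psi)\subseteq W(\tau_1)$ (discarding finitely many $q$ does not change a $\limsup$ set), and Jarn\'ik--Besicovitch together with $\epsilon\to0^+$ finishes.

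The lower bound has a genuine gap that costs a factor of $2$ in the exponent, and it comes from building the Cantor set around a \emph{single} denominator $q_{n_{k+1}}$ at each level. Your child count $\approx q_{n_{k+1}}q_{n_k}^{-(1+\tau_2)}$ per parent is right, but with children of length $q_{n_{k+1}}^{-(1+\tau_2)}$ spaced $1/q_{n_{k+1}}$ apart the uniform-mass estimate caps out at $s=1/(1+\tau_2)$, not $2/(1+\tau_2)$. In fact no measure supported on your Cantor subset can do better: for a sequence growing as fast as $q_{n_{k+1}}>q_{n_k}^{1+\tau_2}$, the crude cover already gives
\begin{equation*}
\mathcal{H}^s\Bigl(\limsup_{k\to+\infty}\bigcup_{p}B\bigl(p/q_{n_k},q_{n_k}^{-(1+\tau_2)}\bigr)\Bigr)\leq\liminf_{K\to+\infty}\sum_{k\geq K}q_{n_k}^{\,1-(1+\tau_2)s}=0\quad\text{for every }s>\tfrac{1}{1+\tau_2},
\end{equation*}
since $1-(1+\tau_2)s<0$ and $(q_{n_k})$ is superexponential. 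The $2$ in $2/(1+\tau_2)$ comes from using a whole \emph{range} of denominators $q\in[R,2R]$ at each level: those $\approx R^2$ Farey fractions are separated by at least $(2R)^{-2}$, and it is the spacing $R^{-2}$ (not $R^{-1}$) that produces the numerator $2$. This is also exactly where monotonicity must enter, which your argument never uses: if $\psi(q_0)>q_0^{-\tau_2}$ and $\psi$ is non-increasing, then $\psi(q)\geq\psi(q_0)>q_0^{-\tau_2}\geq 2^{-\tau_2}q^{-\tau_2}$ for all $q\in[q_0/2,q_0]$, so a full dyadic block of admissible denominators sits around every element of $Q$. Dropping monotonicity is precisely the mechanism behind the paper's Theorem~\ref{thm: DL 5}, where a non-monotone $\psi$ collapses the dimension in the L\"uroth setting; an analogous deflation below $2/(1+\underline{\tau}_\psi)$ occurs classically when the ``good'' denominators form a thin set, so a proof that only extracts a sparse subsequence from $Q$ cannot reach the sharp exponent.
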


In summary, the results of Khinchine, Jarn\'ik--Besicovitch, and Dodson focus on the metric properties of well approximable numbers, specifically their Lebesgue measure and Hausdorff dimension. It is worth to notice that the fractions $p/q$ that approximate $x\in[0,1)$ in \eqref{eq: Khinchine ineq} are not required to be in their simplest forms nor specified forms. However, imposing additional constraints on the approximating fractions may lead to a reduction in both measure and dimension. This paper presents analogous results under an alternative setting, where the approximating fractions $p/q$ in \eqref{eq: Khinchine ineq} are required to be a L\"uroth convergent of $x$.

\section{Preliminary}

For any $x\in(0,1]$, there exists a unique sequence $(d_n)_{n\in\mathbb{N}}$ of positive integers, each greater than 1, referred to as digits, such that
\begin{align}
    x
    &=[d_1,d_2,d_3,\ldots,d_n,\ldots] \label{eq: Luroth Repr}\\
    &\coloneqq\sum_{k=1}^{\infty}\frac{1}{d_k\prod_{j=1}^{k-1}d_j(d_j-1)}; \label{eq: Luroth Expa}
\end{align}
where for any $n\in\mathbb{N}$, $d_n=d_n(x)\in\mathbb{N}\setminus\{1\}$; \eqref{eq: Luroth Repr} is referred to as the L\"uroth representation of $x$, and \eqref{eq: Luroth Expa} as the L\"uroth expansion of $x$. The sequence $(d_n)_{n\in\mathbb{N}}$ can be determined by an iterative process induced by the L\"uroth map $T:(0,1]\to(0,1]$, defined by for any $x\in(0,1]$,
\begin{align*}
    T(x)\coloneqq\left\lfloor\frac{1}{x}\right\rfloor\left(\left\lfloor\frac{1}{x}+1\right\rfloor x-1\right),
\end{align*}
As illustrated in Figure~\ref{fig: Luroth map}, the L\"uroth map consists of countably many linear pieces, each defined by a distinct linear equation. For any $x\in(0,1]$, the first digit of $x$ in the L\"uroth representation is given by $d_1(x)=\lfloor1/x\rfloor+1$, and for any $n\in\mathbb{N}\setminus\{1\}$, $d_n(x)=d_1(T^{n-1}(x))$. 

\begin{figure}[H]
\centering
\begin{tikzpicture}
\begin{axis}[
    axis lines = left,
    axis x line=center,
    axis y line=center,
    width=0.618\textwidth,
    height=0.618\textwidth,
    xmin=-.1,
    xmax=1.15,
    ymin=-.1,
    ymax=1.15,
    xtick={1,1/2,1/3},
    xticklabels={1,$1/2$,$1/3$},
    ytick={1},
    xlabel = \(x\),
    ylabel = \(T(x)\),
    axis equal,
]
\foreach \d in {2, 3, ..., 13} {
    \addplot[domain={1/\d:1/(\d-1)}, samples=2] {(\d-1)*(\d*x-1)};
}
\node at (1/26,0.5) {$\cdots$};
\end{axis}
\end{tikzpicture}
\caption{L\"uroth Map $T:(0,1]\to(0,1]$}
\label{fig: Luroth map}
\end{figure}

Let $x\in(0,1]$ and $n\in\mathbb{N}$. Define $x_n$, the $n$-th L\"uroth convergent of $x$, as the $n$-th partial sum in \eqref{eq: Luroth Expa}; that is 
\begin{align*}
    x_n
    &\coloneqq[d_1,d_2,d_3,\ldots,d_n] \coloneqq\sum_{k=1}^n\frac{1}{d_k\prod_{j=1}^{k-1}d_j(d_j-1)} \\
    &=\frac{1}{d_1}+\frac{1}{d_1(d_1-1)d_2}+\frac{1}{d_1(d_1-1)d_2(d_2-1)d_3}+\cdots+\frac{1}{d_1(d_1-1)\cdots d_{n-1}(d_{n-1}-1)d_n}.
\end{align*}
The unsimplified numerator $P_n(x)$ and denominator $Q_n(x)$ of the $n$-th L\"uroth convergent of $x$ are respectively defined  by,
\begin{align}
    \label{eq: P_n(x)}
    P_n(x) &\coloneqq [d_1,d_2,\ldots,d_n]Q_n(x), \\
    \label{eq: Q_n(x)}
    Q_n(x) &\coloneqq d_n\prod_{j=1}^{n-1}d_j(d_j-1)=d_1(d_1-1)d_2(d_2-1)\cdots d_n,
\end{align}
with the convention that the empty product equals 1. Note that the fraction $P_n(x)/Q_n(x)$ may not be in its simplest form. For example, for $x=27/71=[3,4,3,4,3,4,\ldots]$ and any $n\in\mathbb{N}\setminus\{1\}$, $\gcd{(P_n(x),Q_n(x))}=2>1$.

Let $\psi:\mathbb{N}\to(0,1]$. Define $L(\psi)$, the set of all L\"uroth $\psi$-well approximable numbers, as
\begin{align*}
    L(\psi)
    \coloneqq
    \limsup_{n\to+\infty}
    \left\{x\in(0,1]:\left|x-\frac{P_n(x)}{Q_n(x)}\right|<\frac{\psi(Q_n(x))}{Q_n(x)}\right\}.
\end{align*}
In the setting of $L(\psi)$, the approximating fractions $p/q$ in \eqref{eq: Khinchine ineq} are required to be L\"uroth convergents of $x$. Let $\tau\geq0$. Define $L(\tau)$, the set of all L\"uroth $\tau$-well approximable numbers, as
\begin{align}
    \label{eq: L tau}
    L(\tau)
    \coloneqq\limsup_{n\to+\infty}
    \left\{x\in(0,1]:\left|x-\frac{P_n(x)}{Q_n(x)}\right|<\frac{1}{{Q_n(x)}^{1+\tau}}\right\}.
\end{align}
$L(\tau)$ coincides precisely with $L(\psi)$ when $\psi$ is chosen as $\psi(q)\coloneqq1/q^\tau$ for any $q\in\mathbb{N}$. The above definitions can be compared with $W(\psi)$ and $W(\tau)$ defined in \eqref{eq: W psi} and \eqref{eq: W tau} respectively.

\section{Main Results}

Theorem~\ref{thm: DL 1} is an analogue of the Khintchine Theorem, which is claimed in the previous research of Tan--Zhou \cite{tan2021approximation}. Theorem~\ref{thm: DL 2} is an analogue of the Jarn\'ik--Besicovitch Theorem, which improves the previous result of Cao--Wu--Zhang \cite[Theorem 1.2]{cao2013efficiency}. Theorems~\ref{thm: DL 3} and \ref{thm: DL 4} are generalisations of Theorem~\ref{thm: DL 2} for general functions. Theorem~\ref{thm: DL 5} is a counterexample to a conjecture stated by Tan--Zhou \cite[Conjecture 2]{tan2021approximation}, an analogue of the result of Dodson. Theorem~\ref{thm: DL 6} is a partial result to a revised conjecture.

Theorem~\ref{thm: DL 1} is originally claimed by Tan--Zhou \cite{tan2021approximation}, and can be seen as an analogue of the Khintchine Theorem.
\begin{thm}[Tan--Zhou \cite{tan2021approximation,tan2021dimension}, 2021]\label{thm: DL 1}
Let $\psi:\mathbb{N}\to(0,1]$. Suppose $\psi$ is non-increasing. Then
\begin{align*}
    \mathcal{L}\left(L(\psi)\right)=
    \begin{dcases}
    0,& \text{if}\quad\sum\nolimits_{q=1}^{\infty}\frac{-\psi(q)\log{\psi(q)}}{q}<+\infty;\\
    1,& \text{if}\quad\sum\nolimits_{q=1}^{\infty}\frac{-\psi(q)\log{\psi(q)}}{q}=+\infty.
    \end{dcases}
\end{align*}
\end{thm}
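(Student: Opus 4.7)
The plan is to reformulate $L(\psi) = \limsup_n E_n$ via the L\"uroth tail identity, then apply Borel--Cantelli in both directions. The key structural fact is that under Lebesgue measure the L\"uroth digits $(d_n)_{n \in \mathbb{N}}$ are independent with $\mathcal{L}\{x : d_1(x) = k\} = 1/[k(k-1)]$ for $k \geq 2$, and $T^n(x)$ is uniform on $(0,1]$ independently of $(d_1,\ldots,d_n)$. Direct expansion of the L\"uroth series gives the tail identity
\[
    x - \frac{P_n(x)}{Q_n(x)} = \frac{T^n(x)}{Q_n(x)\bigl(d_n(x)-1\bigr)},
\]
converting $E_n := \{x : |x - P_n(x)/Q_n(x)| < \psi(Q_n(x))/Q_n(x)\}$ into $\{x : T^n(x) < (d_n-1)\psi(Q_n)\}$. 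Independence then yields $\mathcal{L}(E_n) = \mathbb{E}\bigl[\min\{(d_n-1)\psi(Q_n),1\}\bigr]$. Setting $R_{n-1} := \prod_{j=1}^{n-1} d_j(d_j-1)$, so that $Q_n = R_{n-1}d_n$, and summing over the independent $d_n$ gives $\mathcal{L}(E_n) = \mathbb{E}[F(R_{n-1})]$ with
\[
    F(R) := \sum_{d \geq 2} \min\Bigl\{\frac{\psi(Rd)}{d},\,\frac{1}{d(d-1)}\Bigr\}.
\]
Splitting the sum at the threshold $d \asymp 1/\psi(R)$ and using $\psi(Rd) \leq \psi(R)$ produces $F(R) \leq C(-\psi(R)\log\psi(R))$.

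The next step relates $\sum_n \mathbb{E}[F(R_{n-1})]$ to the theorem's series. Since $\log R_n = \sum_{j \leq n}\log[d_j(d_j-1)]$ is an i.i.d.\ sum with finite mean $h = \sum_{k \geq 2}\log[k(k-1)]/[k(k-1)]$, Blackwell's renewal theorem gives $\sum_n \mathbb{P}(R_{n-1} \in [q, 2q)) \to \log 2/h$ as $q \to \infty$, and rearranging
\[
    \sum_n \mathcal{L}(E_n) = \sum_q F(q)\sum_n \mathbb{P}(R_{n-1} = q) \ll \sum_q \frac{F(q)}{q} \ll \sum_q \frac{-\psi(q)\log\psi(q)}{q}.
\]
In the convergent case, the first Borel--Cantelli lemma now forces $\mathcal{L}(L(\psi)) = 0$.

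For the divergent case, a matching lower bound $F(q) \gg -\psi(q)\log\psi(q)$ is needed. This holds once $\psi$ varies slowly enough on $[q, q/\psi(q)]$ for the logarithmic factor to survive, and this slow variation is exactly what divergence of the target series imposes (fast-decaying $\psi$ such as power laws already make the series converge). Because the events $E_n$ share digits and fail to be independent, the second Borel--Cantelli lemma is unavailable; I would instead invoke Knopp's Lemma, which reduces $\mathcal{L}(L(\psi)) = 1$ to a uniform lower bound $\mathcal{L}(L(\psi) \cap I_m) \geq c\,\mathcal{L}(I_m)$ across all L\"uroth cylinders $I_m = I_m(d_1,\ldots,d_m)$. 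This is obtained by rerunning the preceding analysis starting from level $m$ inside each cylinder (using that $\sum_{q > Q_m(x)} -\psi(q)\log\psi(q)/q$ still diverges) and applying a Paley--Zygmund second-moment estimate to $\sum_{m \leq n \leq N} \mathbf{1}_{E_n}$ to control correlations.

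The main obstacle is the two-sided comparison $\sum_n \mathbb{E}[F(R_{n-1})] \asymp \sum_q -\psi(q)\log\psi(q)/q$. The upper bound follows cleanly from monotonicity of $\psi$ together with the renewal theorem, but the matching lower bound in the divergent half is delicate: it must recover the logarithmic factor in $F$, quantify how divergence of the target series enforces slow decay of $\psi$, and propagate the bound uniformly into every cylinder for Knopp's Lemma. This layered estimate—in particular the second-moment/correlation control within each cylinder—is presumably where the original argument of Tan--Zhou required supplementing.
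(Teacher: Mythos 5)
Your proposal takes a genuinely different route from the paper. The paper does not prove the statement from scratch: it observes that \cite[(4.1)]{tan2021dimension} gives $\frac{1}{d_n(x)d_{n+1}(x)}<|xQ_n(x)-P_n(x)|<\frac{4}{d_n(x)d_{n+1}(x)}$, sandwiches $L(\psi)$ between the two $\limsup$ sets $\{d_n d_{n+1}\geq 4/\psi(Q_n)\}$ and $\{d_n d_{n+1}\geq 1/\psi(Q_n)\}$, introduces auxiliary non-decreasing functions $\varphi_0,\varphi_1$, and verifies that the corresponding series $\sum \log\varphi_i(q)/(q\varphi_i(q))$ converges or diverges in step with $\sum -\psi(q)\log\psi(q)/q$. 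It then cites both halves of \cite[Corollary 1.3]{tan2021dimension} to conclude. In other words, the paper's contribution here is precisely a short logical equivalence between the stated theorem and a result that is already proved elsewhere.

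You, by contrast, are attempting a self-contained probabilistic proof: re-expressing $E_n$ via the tail identity $x-P_n(x)/Q_n(x)=T^n(x)/\bigl[(d_n(x)-1)Q_n(x)\bigr]$, exploiting the independence and uniformity of $T^n$ under Lebesgue measure to compute $\mathcal{L}(E_n)=\mathbb{E}[F(R_{n-1})]$, comparing $\sum_n\mathcal{L}(E_n)$ to $\sum_q -\psi(q)\log\psi(q)/q$ via Blackwell's (non-lattice) renewal theorem, and then applying Borel--Cantelli I for convergence and a Knopp-plus-second-moment argument for divergence. This is the right shape for a direct proof and is essentially the underlying argument that the cited \cite[Corollary 1.3]{tan2021dimension} relies on. Your convergent half is plausible modulo care (one must note $\psi\to 0$ in the convergent case so that $-\psi\log\psi$ is eventually non-increasing, which is needed to pass from $\sum_q F(q)U(q)$ to $\sum_q F(q)/q$ via the renewal estimate). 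But your divergent half is only a sketch, and you correctly identify that the matching lower bound $F(q)\gg -\psi(q)\log\psi(q)$ does not follow from monotonicity alone; the claim that divergence of the target series forces the required slow variation of $\psi$ is asserted rather than established, and the Knopp/Paley--Zygmund machinery across cylinders is a substantial missing piece. So the two proofs buy different things: the paper's reduction is short and complete because it leans on a cited result; yours, if completed, would be self-contained but effectively re-proves that cited result, and its hardest part is still open in the proposal.
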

Let $\psi:\mathbb{N}\to(0,1/2]$ be non-increasing. The divergence of $\sum\nolimits_{q=1}^{\infty}-\psi(q)\log{\psi(q)}/q$ implies the divergence of $\sum\nolimits_{q=1}^{\infty}\psi(q)$; however, the converse does not necessarily hold. Thus, $\mathcal{L}(L(\psi))\leq\mathcal{L}(W(\psi))$ is obtained, which aligns with the intuitive expectation that imposing additional restrictions on the approximating fractions can only reduce the Lebesgue measure.

The statement of Theorem~\ref{thm: DL 1} appears in \cite{tan2021approximation}, and is claimed to be studied in \cite{tan2021dimension}. However, only an equivalent statement of the theorem is presented in \cite{tan2021dimension}, without explicitly stating the equivalence or providing its proof. In this paper, a supplementary proof of Theorem~\ref{thm: DL 1} is given.

The following dimensional result is proved by Cao--Wu--Zhang \cite[Theorem 1.2]{cao2013efficiency}, and can be seen as an analogue of the Jarn\'ik--Besicovitch Theorem.
\begin{thm*}[Cao--Wu--Zhang {\cite[Theorem 1.2]{cao2013efficiency}}, 2013]
For any $\tau\geq1$, 
\begin{align*}
    \dim{L(\tau)}=\frac{1}{1+\tau}.
\end{align*}
\end{thm*}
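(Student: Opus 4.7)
My plan is to prove $\dim L(\tau) = 1/(1+\tau)$ by matching upper and lower bounds, following the classical Jarn\'ik--Besicovitch template adapted to the L\"uroth setting.

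For the upper bound, the set $L(\tau)$ admits a natural cover by intervals $J_n(d_1, \ldots, d_n) := [x_n,\, x_n + Q_n^{-(1+\tau)}]$ indexed by $n \geq N$ and admissible digit blocks; the inclusion $J_n \subseteq I(d_1, \ldots, d_n)$ holds because $d_n - 1 \leq Q_n \leq Q_n^\tau$ for $\tau \geq 1$. Fix $s > 1/(1+\tau)$ and set $\beta := s(1+\tau) > 1$. The $s$-sum factorises across digit positions as
\begin{align*}
\sum_{n \geq N} \sum_{(d_1, \ldots, d_n)} Q_n^{-s(1+\tau)} = B(\beta) \sum_{n \geq N} A(\beta)^{n-1},
\end{align*}
where $A(\beta) := \sum_{d \geq 2} (d(d-1))^{-\beta}$ and $B(\beta) := \sum_{d \geq 2} d^{-\beta}$. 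Since $A(1) = 1$ by telescoping and $A$ is strictly decreasing, $A(\beta) < 1$; the geometric tail vanishes as $N \to \infty$, giving $\mathcal{H}^s(L(\tau)) = 0$ and hence $\dim L(\tau) \leq 1/(1+\tau)$.

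For the lower bound, I would construct a Cantor subset $F \subseteq L(\tau)$ with $\dim F \geq 1/(1+\tau) - \epsilon$ and then let $\epsilon \to 0$. A natural candidate interleaves ``free'' blocks of consecutive digits drawn from $\{M, M+1, \ldots, 2M\}$ (providing branching) with isolated ``special'' levels at which $d_n \in [Q_{n-1}^\tau,\, 2 Q_{n-1}^\tau]$ (forcing the approximation condition to hold infinitely often so that $F \subseteq L(\tau)$). Equip $F$ with its natural Bernoulli-type probability measure $\mu$ and apply the mass distribution principle, with the block length and the parameter $M$ calibrated to drive the resulting dimension estimate arbitrarily close to the target.

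The principal obstacle is the multi-scale nature of the required mass distribution estimate. The children of any cylinder at the next special level concentrate in a subinterval of length approximately $|I| \cdot Q_n^{-\tau}$, much smaller than $|I|$ itself, so cylinder-scale comparisons alone are insufficient; one must verify a bound of the form $\mu(B(x, r)) \leq C\, r^{s}$ at every scale $r > 0$ with $s$ only slightly below $1/(1+\tau)$, the critical estimate arising at the cluster scale $r \sim |I| \cdot Q_n^{-\tau}$. A cleaner but less explicit alternative is to invoke the Beresnevich--Velani Mass Transference Principle applied to the trivially full-measure limsup of the cylinder intervals themselves, bypassing the Cantor construction entirely at the cost of losing concrete witnesses in $L(\tau)$ realising the extremal dimension.
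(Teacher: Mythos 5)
Your upper bound is the same as the paper's: cover $L(\tau)$ by the intervals $\left(\frac{P_n}{Q_n},\frac{P_n}{Q_n}+Q_n^{-(1+\tau)}\right)$, observe that the $s$-sum over digit strings factorises as $B(\beta)\sum_n A(\beta)^{n-1}$ with $A(\beta)<1$ for $\beta>1$, and let the tail vanish. One remark: the containment of these intervals in the cylinder is not actually used anywhere in the covering estimate, so your side-condition $d_n-1\leq Q_n^\tau$ (and hence the restriction to $\tau\geq1$) is dispensable — the argument gives $\dim L(\tau)\leq 1/(1+\tau)$ for all $\tau\geq0$, which is precisely how the paper extends Cao--Wu--Zhang's range.

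For the lower bound you offer two routes, and they diverge in interest. Your primary route — a Cantor subset interleaving free blocks with special levels and a Bernoulli-type mass distribution — is the classical Cao--Wu--Zhang method. You correctly identify its hard step: the children of a cylinder at a special level cluster in a subinterval of relative length about $Q_n^{-\tau}$, so one must verify $\mu(B(x,r))\lesssim r^s$ at that intermediate scale, not merely at cylinder endpoints. (There is also a small calibration issue: since the level-$(n-1)$ error is comparable to $\bigl(Q_{n-1}(d_{n-1}-1)(d_n-1)\bigr)^{-1}$, you need $(d_{n-1}-1)(d_n-1)>Q_{n-1}^\tau$, so $d_n\in[Q_{n-1}^\tau,2Q_{n-1}^\tau]$ should be shifted slightly upward, or the free digits taken $\geq3$; routine, but the endpoint as written just barely fails.) Your secondary suggestion — apply the Beresnevich--Velani Mass Transference Principle to the limsup of the approximation intervals — is exactly what the paper does. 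It expresses $L(\tau)$ as a limsup of explicit intervals indexed by L\"uroth triples $(P_k,Q_k,d_k)$, checks via a converse to the Dirichlet-type bound that this limsup equals $L(\tau)$, notes that the $1/(1+\tau)$-blow-ups of those intervals contain the full cylinder intervals $(P_k/Q_k,P_k/Q_k+1/Q_k)$ and hence cover every irrational, and invokes MTP. This completely sidesteps the multi-scale measure estimate you flag as the obstacle, yields $\mathcal{H}^{1/(1+\tau)}(L(\tau))=+\infty$ as a bonus, and works uniformly for $\tau\geq0$. So the two approaches are both viable; the MTP route is what the paper contributes as a simplification, and your instinct that it is ``cleaner'' is precisely the point.
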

By comparing with the Jarn\'ik--Besicovitch Theorem, the result above states that for any $\tau\geq1$, the Hausdorff dimension of $L(\tau)$ is exactly half that of $W(\tau)$. This comparison again aligns with the intuitive expectation that imposing additional restrictions can only reduce the Hausdorff dimension.

In some works \cite{tan2021approximation,tan2021dimension}, the result of Cao--Wu--Zhang is claimed true for all non-negative $\tau\geq0$, though no explicit explanation is provided. In this paper, Theorem~\ref{thm: DL 2} is proved to verify the claim with extra information on its Hausdorff measure. 
\begin{thm}\label{thm: DL 2}
For any $\tau\geq0$, 
\begin{align*}
    \dim{L(\tau)}=\frac{1}{1+\tau},
\end{align*}
and the $1/(1+\tau)$-Hausdorff measure of $L(\tau)$ is infinite, that is
\begin{align*}
    \mathcal{H}^{1/(1+\tau)}{({L(\tau)})}=+\infty.
\end{align*}
\end{thm}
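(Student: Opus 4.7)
The plan is to handle the two assertions separately: the upper bound $\dim L(\tau) \leq 1/(1+\tau)$ via a direct covering argument, and the matching lower bound together with the infinite Hausdorff measure claim via the Beresnevich--Velani Mass Transference Principle. The case $\tau = 0$ is essentially trivial since $L(0) = (0,1]$ modulo a null set, so I assume $\tau > 0$ throughout.

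For the upper bound, write $y_d \coloneqq P_n(d)/Q_n(d)$ and $q_d \coloneqq Q_n(d)$ for a finite digit string $d = (d_1, \ldots, d_n)$, using $P_n$ and $Q_n$ as in \eqref{eq: P_n(x)} and \eqref{eq: Q_n(x)}. Re-indexing the defining limsup of $L(\tau)$ via the first $n$ L\"uroth digits of $x$ yields, for every $N$, the cover
\[
L(\tau) \;\subseteq\; \bigcup_{n \geq N} \bigcup_{|d| = n} B\bigl(y_d,\, q_d^{-(1+\tau)}\bigr).
\]
For $s > 1/(1+\tau)$, I would control the $s$-Hausdorff pre-measure by $\sum_{n \geq N} \sum_{|d|=n} q_d^{-(1+\tau)s}$; substituting the product expression in \eqref{eq: Q_n(x)} factorises the inner sum into sums over each digit, and each factor $\sum_{d\geq 2}(d(d-1))^{-(1+\tau)s}$ is strictly less than the telescoping value $\sum_{d\geq 2}(d(d-1))^{-1} = 1$ because $(1+\tau)s > 1$. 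This produces geometric decay in $n$, hence $\mathcal{H}^s(L(\tau)) = 0$ for every $s > 1/(1+\tau)$.

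For the lower bound, fix $N \geq 3$ and, for each $d = (d_1, \ldots, d_n)$ with $d_n \in \{3, \ldots, N\}$, set
\[
c_d \coloneqq y_d + \tfrac{1}{2} q_d^{-(1+\tau)}, \qquad R_d \coloneqq 2^{-1/(1+\tau)}\, q_d^{-1}, \qquad B_d \coloneqq B(c_d, R_d).
\]
At the target exponent $s = 1/(1+\tau) \in (0,1)$, the Beresnevich--Velani shrunken ball $B_d^s \coloneqq B(c_d, R_d^{1/s})$ coincides with the open interval $(y_d, y_d + q_d^{-(1+\tau)})$. For $n$ large enough that $d_n - 1 \leq N - 1 \leq q_d^{\tau}$ (automatic since $q_d \geq 2^n$), this interval is contained in the cylinder $I(d) \coloneqq \{x \in (0,1] : d_j(x) = d_j,\ j = 1, \ldots, n\}$; consequently any $x \in B_d^s$ has $(d_1(x), \ldots, d_n(x)) = d$ and $|x - P_n(x)/Q_n(x)| = |x - y_d| < Q_n(x)^{-(1+\tau)}$, so $\limsup_d B_d^s \subseteq L(\tau)$. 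Next, using the L\"uroth identity $x - y_{(d_1(x), \ldots, d_n(x))} = T^n(x)/(Q_n(x)(d_n(x) - 1))$, the condition $x \in B_{(d_1(x), \ldots, d_n(x))}$ reduces asymptotically to $T^n(x) < (d_n(x) - 1)/2^{1/(1+\tau)}$, which is automatic whenever $d_n(x) \geq 3$ since then $(d_n(x) - 1)/2^{1/(1+\tau)} \geq 2^{\tau/(1+\tau)} > 1 \geq T^n(x)$. Under Lebesgue measure the L\"uroth digits are i.i.d.\ with $\mathcal{L}\{d_1 = k\} = 1/(k(k-1))$, so Borel--Cantelli gives $d_n(x) \in \{3, \ldots, N\}$ for infinitely many $n$ almost surely, whence $\mathcal{L}(\limsup_d B_d) = 1$. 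The Beresnevich--Velani Mass Transference Principle then delivers $\mathcal{H}^{1/(1+\tau)}(\limsup_d B_d^s) = +\infty$ on every open subinterval of $(0,1]$, and the inclusion above gives $\mathcal{H}^{1/(1+\tau)}(L(\tau)) = +\infty$; combining with the upper bound closes the dimension equality.

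The main technical obstacle is the asymmetry of a L\"uroth cylinder about its associated convergent: since $y_d$ is the \emph{left endpoint} of $I(d)$, a ball literally centred at $y_d$ would straddle a cylinder boundary, and a naive Mass Transference Principle argument would yield $\mathcal{H}^{1/(1+\tau)}(\limsup_d B(y_d, q_d^{-(1+\tau)})) = +\infty$ only for a set strictly larger than $L(\tau)$. The off-centre choice $c_d = y_d + \tfrac{1}{2} q_d^{-(1+\tau)}$ is engineered so that $B_d^s$ sits flush against the left endpoint of $I(d)$ from inside, while the base radius $R_d \asymp 1/q_d$ remains wide enough that the ergodic input plus the digit restriction $d_n \in \{3, \ldots, N\}$ deliver a full-measure limsup. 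Balancing the cylinder containment of $B_d^s$ against the full-measure requirement on $\limsup_d B_d$ is the crux of the argument.
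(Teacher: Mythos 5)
Your proof is correct and follows the same two-pronged strategy as the paper: a direct covering argument for the upper bound $\dim L(\tau)\leq 1/(1+\tau)$, and the Beresnevich--Velani Mass Transference Principle for the matching lower bound and the infinite $\mathcal{H}^{1/(1+\tau)}$-measure. The structural insight in both is that a L\"uroth convergent sits at the \emph{left endpoint} of its cylinder, so the candidate $\limsup$ intervals must be handled asymmetrically before the MTP applies. The technical packaging differs: the paper expresses $L(\tau)$ as $\limsup_k B_{\tau,k}$ for left-anchored truncated intervals and gets full Lebesgue measure of $\limsup_k B_{\tau,k}^{1/(1+\tau)}$ by observing that irrationals have $d_n\geq 3$ infinitely often, whereas you centre the balls at $c_d=y_d+\tfrac12 q_d^{-(1+\tau)}$ with radius $\asymp q_d^{-1}$, restrict the last digit to a window $\{3,\dots,N\}$ so the shrunken ball stays inside the cylinder uniformly in $d_n$, and invoke second Borel--Cantelli on the i.i.d.\ digit sequence. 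Your explicit off-centring and digit bound actually avoid a small imprecision in the paper's intermediate claim $(P_k/Q_k,\,P_k/Q_k+1/Q_k)\subset B_{\tau,k}^{1/(1+\tau)}$, which fails for large $Q_k$ (when $B_{\tau,k}$ has radius $\tfrac12 Q_k^{-(1+\tau)}$, the right endpoint of $B_{\tau,k}^{1/(1+\tau)}$ is roughly $y_k+2^{-1/(1+\tau)}Q_k^{-1}$, which is strictly less than $y_k+Q_k^{-1}$); the correct and sufficient statement is the cylinder inclusion for $d_k\geq 3$, which your setup makes transparent.

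Two cautions. First, your $B_d^s\coloneqq B(c_d,R_d^{1/s})$ is the inverse of the Beresnevich--Velani convention, in which $B^s$ denotes the \emph{enlargement} $B(c,r^s)$; your application of the theorem is internally consistent (your $B_d$ is precisely the BV enlargement of your $B_d^s$), but a reader expecting the standard notation may stumble. Second, your dismissal of $\tau=0$ as ``essentially trivial'' is fine for the dimension equality, but the infinite Hausdorff measure assertion is in fact false at $\tau=0$: since $\mathcal{H}^1$ coincides with Lebesgue measure on $\mathbb{R}$ and $L(0)\subset(0,1]$, one has $\mathcal{H}^1(L(0))\leq 1<+\infty$. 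This is a defect in the theorem as stated rather than in your argument---the paper's derivation of the infinite measure also tacitly assumes $\tau>0$---but it should be called out explicitly rather than absorbed into a parenthetical.
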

In the proof, the Beresnevich--Velani Mass Transference Principle \cite[Theorem 2]{beresnevich2006mass} is applied. This approach not only extends the range from the previous result $\tau\geq1$ to $\tau\geq0$, but also simplifies the argument in \cite{cao2013efficiency}, and provides the Hausdorff measure at the critical exponent.

Before discussing the analogue of the result of Dodson in the L\"uroth setting, a few generalisations to Theorem~\ref{thm: DL 2} are given. Let $\psi:\mathbb{N}\to(0,1]$. Define the lower and upper orders at infinity of $1/\psi$ respectively by
\begin{align}
\label{eq: tau}
    \underline{\tau}_\psi
    \coloneqq\liminf_{q\to+\infty}-\frac{\log{\psi(q)}}{\log{q}};
    &&
    \overline{\tau}_\psi
    \coloneqq\limsup_{q\to+\infty}-\frac{\log{\psi(q)}}{\log{q}}.
\end{align}
\begin{thm}\label{thm: DL 3}
Let $\psi:\mathbb{N}\to(0,1]$. 
\begin{align*}
    \frac{1}{1+\overline{\tau}_\psi}\leq\dim{L(\psi)}\leq\frac{1}{1+\underline{\tau}_\psi}.
\end{align*}
\end{thm}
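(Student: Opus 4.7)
The plan is to reduce the two-sided dimension estimate directly to Theorem~\ref{thm: DL 2} by sandwiching $\psi$ between two power functions. The first step I would establish is a monotonicity lemma for $L(\cdot)$: if $\psi_1,\psi_2:\mathbb{N}\to(0,1]$ satisfy $\psi_1(q)\leq\psi_2(q)$ for all sufficiently large $q$, then $L(\psi_1)\subseteq L(\psi_2)$. This relies on the fact that for every $x\in(0,1]$ the denominators $Q_n(x)$ tend to infinity with $n$ (since every L\"uroth digit is at least $2$), so only the eventual behaviour of $\psi$ affects the limsup set.

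Next, fix $\epsilon>0$. By the definitions in \eqref{eq: tau}, there exists $q_0\in\mathbb{N}$ such that for all $q\geq q_0$,
\[
q^{-(\overline{\tau}_\psi+\epsilon)}\leq\psi(q)\leq q^{-(\underline{\tau}_\psi-\epsilon)},
\]
where the upper bound is read trivially if $\underline{\tau}_\psi-\epsilon\leq 0$. Applying the monotonicity lemma at both ends sandwiches
\[
L(\overline{\tau}_\psi+\epsilon)\subseteq L(\psi)\subseteq L(\underline{\tau}_\psi-\epsilon),
\]
and combining this with monotonicity of Hausdorff dimension and Theorem~\ref{thm: DL 2} will give
\[
\frac{1}{1+\overline{\tau}_\psi+\epsilon}\leq\dim L(\psi)\leq\frac{1}{1+\underline{\tau}_\psi-\epsilon}.
\]
Letting $\epsilon\to 0^+$ yields the claimed inequalities.

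Finally, I would dispose of the degenerate cases separately: if $\overline{\tau}_\psi=+\infty$, the lower bound is $0$ and trivial; if $\underline{\tau}_\psi=0$ (note $\underline{\tau}_\psi\geq 0$ since $\psi\leq 1$), the upper bound is $1$ and trivial; otherwise one may take $\epsilon\in(0,\underline{\tau}_\psi)$ so that Theorem~\ref{thm: DL 2} applies directly to both power functions. I do not anticipate a serious obstacle here, since the analytic heavy lifting is already contained in Theorem~\ref{thm: DL 2}; the only care required is a clean statement of the monotonicity lemma, which is immediate from the limsup definition of $L(\psi)$ together with $Q_n(x)\to+\infty$.
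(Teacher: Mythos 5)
Your proposal is correct and takes essentially the same approach as the paper: sandwich $\psi$ between the power functions $q^{-(\overline{\tau}_\psi+\epsilon)}$ and $q^{-(\underline{\tau}_\psi-\epsilon)}$ for large $q$, use the (eventual) monotonicity of $L(\cdot)$ together with $Q_n(x)\to\infty$ to pass to set inclusions $L(\overline{\tau}_\psi+\epsilon)\subseteq L(\psi)\subseteq L(\underline{\tau}_\psi-\epsilon)$, and then invoke Theorem~\ref{thm: DL 2}. Your explicit treatment of the degenerate cases $\overline{\tau}_\psi=+\infty$ and $\underline{\tau}_\psi=0$ is a small tidying improvement over the paper's write-up, but the argument is the same.
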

Theorem~\ref{thm: DL 3} provides both lower and upper bounds on the Hausdorff dimension of $L(\psi)$, and serves as a generalisation of Theorem~\ref{thm: DL 2}. For $\tau\geq0$ and $\psi:\mathbb{N}\to(0,1]$ taken as $\psi(q)\coloneqq 1/q^{\tau}$ for any $q\in\mathbb{N}$, Theorem~\ref{thm: DL 2} is recovered as $\underline{\tau}_\psi=\overline{\tau}_\psi=\tau$.

From its definition, $L(\psi)$ depends only on the values of $\psi$ evaluated at a specific subset of $\mathbb{N}$ with arbitrarily small density. Since for any $x\in(0,1]$ and $n\in\mathbb{N}$, the denominator $Q_n(x)$ of the $n$-th L\"uroth convergent of \(x\) defined in \eqref{eq: Q_n(x)}, is divisible by $2^{n-1}$. Thus, the contribution of $\psi$ evaluated at non-highly composite numbers is negligible in determining $L(\psi)$, and hence its measure and dimension. Theorem~\ref{thm: DL 4} refines Theorem~\ref{thm: DL 3} by incorporating a precise range for the limit inferior and limit superior in \eqref{eq: tau}.
\begin{thm}\label{thm: DL 4}
Let $\psi:\mathbb{N}\to(0,1]$. 
\begin{align*}
    \frac{1}{1+\overline{\lambda}_\psi}
    \leq\dim{L(\psi)}
    \leq\frac{1}{1+\underline{\lambda}_\psi},
\end{align*}
where $\underline{\lambda}_\psi$ and $\overline{\lambda}_\psi$ are respectively defined by,
\begin{align*}
    \underline{\lambda}_\psi
    \coloneqq
    \liminf_{k\to+\infty}\liminf_{q\in S_k}\frac{-\log{\psi(q)}}{\log{q}}
    \geq\underline{\tau}_{\psi}, &&
    \overline{\lambda}_\psi
    \coloneqq
    \liminf_{k\to+\infty}\limsup_{q\in S_k}\frac{-\log{\psi(q)}}{\log{q}}
    \leq\overline{\tau}_{\psi},
\end{align*}
where for any $k\in\mathbb{N}$,
\begin{align*}
    S_k
    \coloneqq\left\{d_k\prod\nolimits_{j=1}^{k-1}d_j(d_j-1):\text{for any $n\in\mathbb{N}$, }d_n\in\mathbb{N}\setminus\{1\}\right\}\subset\mathbb{N}.
\end{align*}
\end{thm}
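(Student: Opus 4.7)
The plan is to reduce Theorem~\ref{thm: DL 4} directly to Theorem~\ref{thm: DL 2} by exploiting a nestedness property of the sets $(S_k)_{k\in\mathbb{N}}$. I first claim that $S_1\supseteq S_2\supseteq S_3\supseteq\cdots$: given $q = d_{k+1}\prod_{j=1}^{k}d_j(d_j-1) \in S_{k+1}$, setting $e_j\coloneqq d_j$ for $1\leq j\leq k-1$ and $e_k\coloneqq d_k(d_k-1)d_{k+1}\geq 2$ exhibits $q = e_k\prod_{j=1}^{k-1}e_j(e_j-1) \in S_k$. Consequently $k\mapsto\liminf_{q\in S_k}(-\log\psi(q)/\log q)$ is non-decreasing and $k\mapsto\limsup_{q\in S_k}(-\log\psi(q)/\log q)$ is non-increasing, so both $\underline{\lambda}_\psi$ and $\overline{\lambda}_\psi$ are genuine limits as $k\to+\infty$. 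The second ingredient is that $Q_n(x)\geq 2^n$ for every $x\in(0,1]$ and $n\in\mathbb{N}$, an immediate consequence of $d_j\geq 2$ and $d_j(d_j-1)\geq 2$; in particular $Q_n(x)\to+\infty$.

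For the upper bound, fix any $\lambda$ with $0\leq\lambda<\underline{\lambda}_\psi$. By monotonicity choose $k_0\in\mathbb{N}$ with $\liminf_{q\in S_{k_0}}(-\log\psi(q)/\log q)>\lambda$, and then $Q_0\in\mathbb{N}$ such that $\psi(q)<q^{-\lambda}$ for every $q\in S_{k_0}$ with $q\geq Q_0$. For $n\geq k_0$ we have $Q_n(x)\in S_n\subseteq S_{k_0}$, and once $n$ is large enough that $2^n\geq Q_0$, the inequality $Q_n(x)\geq Q_0$ holds for every $x\in(0,1]$. Therefore $\psi(Q_n(x))/Q_n(x) < Q_n(x)^{-(1+\lambda)}$ for all sufficiently large $n$ and all $x$, so the $n$-th approximating set in the $\limsup$ definition of $L(\psi)$ is contained in that of $L(\lambda)$. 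Passing to $\limsup$ yields $L(\psi)\subseteq L(\lambda)$, and Theorem~\ref{thm: DL 2} gives $\dim L(\psi)\leq 1/(1+\lambda)$. Letting $\lambda\uparrow\underline{\lambda}_\psi$ completes the upper bound.

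The lower bound is strictly symmetric. Fix $\tau>\overline{\lambda}_\psi$, choose $k_0$ with $\limsup_{q\in S_{k_0}}(-\log\psi(q)/\log q)<\tau$, and $Q_1$ with $\psi(q)>q^{-\tau}$ for every $q\in S_{k_0}$ with $q\geq Q_1$. The identical inclusion argument with inequalities reversed yields $L(\tau)\subseteq L(\psi)$, hence $\dim L(\psi)\geq 1/(1+\tau)$ by Theorem~\ref{thm: DL 2}; letting $\tau\downarrow\overline{\lambda}_\psi$ finishes the proof. The essential step is recognising the nestedness of $(S_k)$, which collapses the double limits defining $\underline{\lambda}_\psi$ and $\overline{\lambda}_\psi$ into single limits and reduces everything to Theorem~\ref{thm: DL 2}; I do not anticipate any serious technical obstacle beyond this observation.
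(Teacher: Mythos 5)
Your proof is correct, and it takes a genuinely different and cleaner route than the paper. The central observation you make — the nestedness $S_1 \supseteq S_2 \supseteq \cdots$, verified by folding the last two digits into a single new last digit $e_k \coloneqq d_k(d_k-1)d_{k+1}$ — is not stated in the paper, and it does real work: it turns both inner $\liminf$/$\limsup$ expressions into monotone functions of $k$, so that the outer $\liminf$ in the definitions of $\underline{\lambda}_\psi$ and $\overline{\lambda}_\psi$ is a genuine limit (equivalently a $\sup_k$, resp. $\inf_k$). Combined with $Q_n(x)\geq 2^n$, this lets you bound $\psi(Q_n(x))$ \emph{uniformly in $x$} for $n$ large by working in a single fixed $S_{k_0}$ with a single threshold $Q_0$, and thence deduce the clean set inclusions $L(\psi)\subseteq L(\lambda)$ for $\lambda < \underline{\lambda}_\psi$ and $L(\tau)\subseteq L(\psi)$ for $\tau > \overline{\lambda}_\psi$. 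Both bounds then fall out of Theorem~\ref{thm: DL 2} plus monotonicity of Hausdorff dimension.

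The paper's route differs most sharply in the lower bound, where it does not reduce to an inclusion $L(\tau)\subseteq L(\psi)$ but instead builds a bespoke family of intervals $B_{\psi,\varepsilon,i}$ indexed by a sub-re-enumeration of $S$ and re-applies the Mass Transference Principle (Proposition~\ref{prop: AMTP}) from scratch; your argument avoids this extra invocation entirely. For the upper bound, the paper picks a threshold $q_k$ \emph{for each} $k$ and then asserts $L(\psi)\subset L(\underline{\lambda}_\psi - \varepsilon)$ without addressing what happens if the $q_k$ grow faster than $Q_n(x)$ (which for $x$ with all digits equal to $2$ is only $2^n$). Your use of nestedness sidesteps this: once $S_n\subseteq S_{k_0}$ for $n\geq k_0$, one fixed $Q_0$ suffices, and $Q_n(x)\geq 2^n$ eventually exceeds it for every $x$. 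So your proof is not only shorter but also more watertight at this step. The only minor point worth spelling out when writing it up is that $\lambda\geq 0$ (so Theorem~\ref{thm: DL 2} applies), which you have, since $\psi\leq 1$ gives $\underline{\lambda}_\psi\geq\underline{\tau}_\psi\geq 0$.
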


The following conjecture is stated by Tan--Zhou \cite{tan2021approximation}, regarding the Hausdorff dimension of $L(\psi)$, and can be seen as an analogue of the result of Dodson.
\begin{conj*}[Tan--Zhou {\cite[Conjecture 2]{tan2021approximation}}, 2021]
Let $\psi:\mathbb{N}\to(0,1]$.
\begin{align*}
    \dim{L(\psi)}=\frac{1}{1+\underline{\tau}_\psi}.
\end{align*}
\end{conj*}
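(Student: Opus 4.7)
The upper bound $\dim L(\psi) \leq 1/(1+\underline{\tau}_\psi)$ is already delivered by Theorem~\ref{thm: DL 3} (and sharpened by Theorem~\ref{thm: DL 4}, since $\underline{\lambda}_\psi \geq \underline{\tau}_\psi$). The task therefore reduces to establishing the matching lower bound $\dim L(\psi) \geq 1/(1+\underline{\tau}_\psi)$, and the natural plan is to mimic the Jarn\'ik--Besicovitch--Dodson lower bound strategy by exhibiting a Cantor-type subset of $L(\psi)$ of the required dimension.

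Concretely, one would extract a subsequence $(q_k)_{k\in\mathbb{N}}\subset\mathbb{N}$ along which $-\log\psi(q_k)/\log q_k\to\underline{\tau}_\psi$. For each such $q_k$, the plan is to write $q_k = d_n\prod_{j<n}d_j(d_j-1)$, i.e.\ realise $q_k$ as $Q_n(x)$ for some initial L\"uroth block $(d_1,\ldots,d_n)$, then force the subsequent digit large enough that $|x-P_n/Q_n|<\psi(q_k)/q_k$ is automatic, and iterate with enough freedom to preserve dimension. The resulting Cantor set lies in $L(\psi)$, and its dimension can be read off either by a direct mass distribution argument or, more cleanly, by invoking the Beresnevich--Velani Mass Transference Principle exactly as in the proof of Theorem~\ref{thm: DL 2}.

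The step at which this plan is in genuine danger of collapsing is the very first one: the realisation of $q_k$ as a $Q_n(x)$. The admissible denominators of L\"uroth convergents form the highly structured set $\bigcup_{n\in\mathbb{N}}S_n$ appearing in Theorem~\ref{thm: DL 4}; every $q\in S_n$ satisfies $2^{n-1}\mid q$, so the set is very sparse and omits, for instance, all odd primes. A general $\psi$ is free to take its extremal values on integers lying entirely outside $\bigcup_n S_n$, in which case the subsequence witnessing $\underline{\tau}_\psi$ contributes nothing to any L\"uroth approximation, and the only usable information about $\psi$ is its restriction to $\bigcup_n S_n$ --- precisely the quantity $\underline{\lambda}_\psi \geq \underline{\tau}_\psi$ of Theorem~\ref{thm: DL 4}.

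This obstruction is structural rather than technical, and it suggests that the conjecture as stated cannot be proved in full generality: one should be able to design a $\psi$ with $\underline{\lambda}_\psi>\underline{\tau}_\psi$ (e.g.\ by driving $\psi$ down only on integers avoiding $\bigcup_n S_n$), whereupon Theorem~\ref{thm: DL 4} would force $\dim L(\psi)\leq 1/(1+\underline{\lambda}_\psi)<1/(1+\underline{\tau}_\psi)$, contradicting the conjecture. The natural revision is to replace $\underline{\tau}_\psi$ by $\underline{\lambda}_\psi$, in which case the Cantor construction sketched above is expected to yield the matching lower bound, at least when $\underline{\lambda}_\psi=\overline{\lambda}_\psi$; this fits the paper's announcement that Theorem~\ref{thm: DL 5} serves as a counterexample and Theorem~\ref{thm: DL 6} as a partial result toward the revised form.
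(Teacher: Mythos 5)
Your structural diagnosis is exactly the one the paper makes: the L\"uroth denominators live in the very sparse set $\bigcup_n S_n$ (each divisible by $2^{n-1}$), so $L(\psi)$ only ``sees'' $\psi$ on that set, and an arbitrary $\psi$ can behave differently off it, which is why the conjecture as stated fails and why Theorem~\ref{thm: DL 5} is a counterexample rather than a proof. That part matches the paper.

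However, the parenthetical recipe you offer for a counterexample has the direction reversed. To obtain $\underline{\lambda}_\psi>\underline{\tau}_\psi$ --- equivalently, to make $\dim L(\psi)$ drop below $1/(1+\underline{\tau}_\psi)$ --- you should drive $\psi$ \emph{down on the integers inside} $\bigcup_n S_n$ (so that $-\log\psi(q)/\log q$ is large on the denominators that actually occur, inflating $\underline{\lambda}_\psi$ and shrinking $L(\psi)$), while keeping $\psi$ as large as possible on integers \emph{outside} $\bigcup_n S_n$, e.g.\ on odd integers, so that the global $\liminf$ defining $\underline{\tau}_\psi$ is still achieved there and stays small. Driving $\psi$ down only on integers avoiding $\bigcup_n S_n$, as you write, has the opposite effect: it pushes $-\log\psi(q)/\log q$ up precisely on the integers that do not matter, so $\underline{\tau}_\psi$ would then be dominated by the $S_n$-values and one would get $\underline{\tau}_\psi\approx\underline{\lambda}_\psi$, not a gap. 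This is exactly what Theorem~\ref{thm: DL 5} does with $\psi(q)=q^{-(\tau+\nu_2(q))}$: the $2$-adic valuation $\nu_2(q)$ grows without bound on L\"uroth denominators (since $2^{n-1}\mid Q_n$), forcing $\dim L(\psi)=0$, while $\psi(q)=q^{-\tau}$ on odd $q$ keeps $\underline{\tau}_\psi=\tau$.

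A second, smaller divergence: you propose replacing $\underline{\tau}_\psi$ by $\underline{\lambda}_\psi$ in the conjecture, whereas the paper's revised conjecture retains $\underline{\tau}_\psi$ and instead adds the monotonicity hypothesis. These are compatible in spirit (for non-increasing $\psi$ the two exponents should agree, since monotonicity ties the values on $S_n$ to neighbouring integers), but they are not the same statement, and the paper's Theorem~\ref{thm: DL 6} is a partial result toward the monotone form, not the $\underline{\lambda}_\psi$ form.
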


Theorem~\ref{thm: DL 5} demonstrates that the conjecture can fail without the monotonicity assumption. Consequently, the conjecture should be reformulated to include that assumption.
\begin{thm}\label{thm: DL 5}
For any $\tau\geq0$, there exists $\psi:\mathbb{N}\to(0,1]$ such that $\psi$ is not eventually non-increasing, 
$\underline{\tau}_\psi=\tau$, and 
\begin{align*}
    \dim{L(\psi)}=0<\frac{1}{1+\underline{\tau}_\psi}.
\end{align*}
\end{thm}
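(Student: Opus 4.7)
The plan is to exploit a parity observation: for every $k \geq 2$, every element of $S_k$ is even, because each factor $d_j(d_j-1)$ appearing in the defining product is the product of two consecutive positive integers (with $d_j \geq 2$), hence divisible by $2$. This renders the odd integers invisible to the machinery of Theorem~\ref{thm: DL 4} and decouples the global lower order $\underline{\tau}_\psi$ from the structural invariant $\underline{\lambda}_\psi$: I can pin $\underline{\tau}_\psi$ at $\tau$ using the odd integers while driving $\underline{\lambda}_\psi$ to $+\infty$ using the even integers, forcing $\dim L(\psi)=0$ via Theorem~\ref{thm: DL 4}.

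Concretely, fix a function $h : 2\mathbb{N} \to \mathbb{R}$ satisfying $h(q) > \tau$ for all $q$ and $h(q) \to +\infty$ as $q \to \infty$, for example $h(q) \coloneqq \max(\tau+1,\,\log\log q)$. Define
\begin{align*}
    \psi(q) \coloneqq
    \begin{cases}
        q^{-\tau}, & q \text{ odd}, \\
        q^{-h(q)}, & q \text{ even},
    \end{cases}
\end{align*}
and set $\psi(1) \coloneqq 1$; clearly $\psi$ maps $\mathbb{N}$ into $(0,1]$. Non-monotonicity is immediate: for every sufficiently large odd $q$, $\psi(q-1) = (q-1)^{-h(q-1)} < q^{-\tau} = \psi(q)$ because $h(q-1) > \tau$, so $\psi$ increases across infinitely many consecutive pairs and cannot be eventually non-increasing. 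For the lower order at infinity, $-\log\psi(q)/\log q$ equals $\tau$ on every odd $q$ and equals $h(q) \to +\infty$ on the even $q$, so $\underline{\tau}_\psi = \tau$.

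Finally, the dimension bound. For every $k \geq 2$ the parity observation gives $S_k \subset 2\mathbb{N}$, hence $-\log\psi(q)/\log q = h(q)$ for every $q \in S_k$; since $S_k$ is unbounded (let $d_k \to \infty$ with the remaining digits fixed), the inner $\liminf$ in the definition of $\underline{\lambda}_\psi$ equals $\liminf_{q \in S_k} h(q) = +\infty$ for every $k \geq 2$, which forces $\underline{\lambda}_\psi = +\infty$. Theorem~\ref{thm: DL 4} then delivers $\dim L(\psi) \leq 1/(1+\underline{\lambda}_\psi) = 0$. The only substantive obstacle is spotting the parity decoupling between $\bigcup_{k \geq 2} S_k$ and the odd integers; once it is identified, the three required properties follow essentially by inspection, with Theorem~\ref{thm: DL 4} doing the rest.
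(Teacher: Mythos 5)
Your proof is correct and takes a genuinely different route from the paper's. The paper constructs $\psi(q) \coloneqq q^{-(\tau + \nu_2(q))}$, where $\nu_2$ is the 2-adic valuation, and then runs a direct covering argument: since $2^{n-1}\mid Q_n$, the radius $\psi(Q_n)/Q_n$ shrinks like $Q_n^{-(1+\tau+n-1)}$, and summing $s$-th powers of diameters over level-$n$ cylinders shows $\mathcal{H}^s(L(\psi))=0$ for every $s>1/(1+\tau+j)$ with $j$ arbitrary. Your construction instead keeps $\psi$ polynomial of exponent $\tau$ on the odd integers and pushes the exponent to infinity on the evens, and then outsources the dimension bound entirely to Theorem~\ref{thm: DL 4} by computing $\underline{\lambda}_\psi=+\infty$. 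Both exploit the same structural fact --- high divisibility of $Q_n$ by~$2$ --- but you get away with using parity once instead of repeatedly, because the function $h$ does the work of driving the exponent upward. Your route is shorter and more conceptual once Theorem~\ref{thm: DL 4} is in hand; the paper's route is self-contained and also exhibits the rate at which the dimension is killed (bounded by $1/(1+\tau+j)$ for each $j$).

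Two small remarks. First, when you invoke Theorem~\ref{thm: DL 4} with $\underline{\lambda}_\psi=+\infty$, the bound $1/(1+\underline{\lambda}_\psi)$ must be read as~$0$ in the extended reals, and the paper's proof of that theorem as written fixes a finite $\varepsilon<\underline{\lambda}_\psi$; it silently needs the trivial adaptation ``for any $M>0$, $\psi(q)<q^{-M}$ eventually on $\bigcup_{k\geq K}S_k$, hence $\dim L(\psi)\leq 1/(1+M)$'' to cover the infinite case, which your $\psi$ does satisfy. Second, the inner $\liminf_{q\in S_k}$ should be understood as $q\to\infty$ within $S_k$; with that reading (the one consistent with the paper's conventions and with the inequalities $\underline{\tau}_\psi\leq\underline{\lambda}_\psi$), your computation $\liminf_{q\in S_k}h(q)=+\infty$ for $k\geq2$ is exactly right, since $h(q)\to\infty$ along even $q$ and $S_k\subset2\mathbb{N}$ is unbounded. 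Neither point is a gap; both are worth flagging if this were to be written up.
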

\begin{conj*}
Let $\psi:\mathbb{N}\to(0,1]$. Suppose $\psi$ is non-increasing. Then,
\begin{align*}
    \dim{L(\psi)}=\frac{1}{1+\underline{\tau}_\psi}.
\end{align*}
\end{conj*}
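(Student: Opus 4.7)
The upper bound $\dim L(\psi) \leq 1/(1+\underline{\tau}_\psi)$ is an immediate consequence of Theorem~\ref{thm: DL 3} and does not invoke monotonicity. The content of the revised conjecture is therefore the matching lower bound $\dim L(\psi) \geq 1/(1+\underline{\tau}_\psi)$, and the plan is to establish this by a direct Cantor-type construction in which the non-increasing hypothesis is used to rule out precisely the decoupling phenomenon behind Theorem~\ref{thm: DL 5}.

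Write $\tau = \underline{\tau}_\psi$ and fix $\epsilon > 0$. By definition of the liminf there is a sequence $q_k^{\ast}\to +\infty$ with $\psi(q_k^{\ast}) \geq (q_k^{\ast})^{-\tau-\epsilon}$. Monotonicity promotes this pointwise bound on a sparse set to a bound on a whole multiplicative interval: for every $q \in [q_k^{\ast}/2,\, q_k^{\ast}]$ one has $\psi(q) \geq \psi(q_k^{\ast}) \geq (q_k^{\ast})^{-\tau-\epsilon} \geq (2q)^{-\tau-\epsilon}$, so $-\log\psi(q)/\log q \leq \tau + \epsilon + o(1)$ as $q_k^{\ast} \to +\infty$. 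The point of widening the scale is that the set $S_n$ of admissible L\"uroth denominators is sparse in $\mathbb{N}$ yet, for $n$ large, meets every multiplicative interval of the form $[M/2, M]$, so one can always arrange some $Q_{N_k}$ to lie in the good window.

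I then construct a Cantor subset $E \subset L(\psi)$ in alternating regimes. Choose a rapidly growing sequence $N_1 < N_2 < \cdots$. At each stage $N_k$, select $d_{N_k}$ so that $Q_{N_k} \in [q_k^{\ast}/2,\, q_k^{\ast}]$, and then take $d_{N_k+1}$ large enough that $d_{N_k+1}(d_{N_k}-1) \geq 1/\psi(Q_{N_k})$; this forces $|x - P_{N_k}/Q_{N_k}| \sim 1/Q_{N_k+1} \leq \psi(Q_{N_k})/Q_{N_k}$, placing $x$ in $L(\psi)$. By the preceding paragraph the scale at the restricted stage satisfies $1/\psi(Q_{N_k}) \asymp Q_{N_k}^{\tau+\epsilon + o(1)}$. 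All other digits are allowed to vary freely over a set $\{2,\ldots,D\}$, and this freedom supplies the dimensional mass. A standard Frostman / mass-distribution argument, along the lines of the lower-bound Cantor construction of Cao--Wu--Zhang and compatible with the Mass Transference computation used in Theorem~\ref{thm: DL 2}, then yields $\dim E \geq 1/(1+\tau+\epsilon) - o(1)$ provided $D$ is large and the gaps $N_{k+1} - N_k$ grow sufficiently fast relative to $D$. Letting $D \to +\infty$ and $\epsilon \downarrow 0$ concludes.

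The main obstacle is the dimension bookkeeping: one must balance the free scale $D$ against the restricted scale $Q_{N_k}^{\tau+\epsilon}$ so that the sporadic forced digits do not drag the dimension below $1/(1+\tau+\epsilon)$. A secondary but conceptually essential point is the quantitative density of $S_n$ in multiplicative windows, which is what allows $Q_{N_k}$ to be placed inside $[q_k^{\ast}/2,\, q_k^{\ast}]$. It is precisely the density argument, used in conjunction with $\psi$ being non-increasing, that is unavailable in the general setting, in agreement with the counterexample of Theorem~\ref{thm: DL 5}.
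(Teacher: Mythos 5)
This statement is the paper's \emph{revised conjecture}, which the paper leaves open; the only thing the paper establishes in this direction is Theorem~\ref{thm: DL 6}, a partial result requiring the extra hypotheses $\underline{\tau}_\psi>1$, $\theta_s$ non-increasing, and divergence of $\sum (\psi(q)/q)^s\log q$. So there is no proof in the paper to compare against, and the question is whether your sketch actually closes the problem.

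Your strategy is the natural one: use monotonicity to spread the sparse set of $q$ with $\psi(q)\geq q^{-\tau-\epsilon}$ to multiplicative windows $[q_k^\ast/2,q_k^\ast]$, place the L\"uroth denominator $Q_{N_k}$ in such a window (possible since the denominator lattice has multiplicative gaps $<2$), and force one subsequent digit. That much is sound. The gap is exactly what you flag as ``the main obstacle'': the Frostman / mass-distribution bookkeeping is asserted, not carried out, and it is not routine here. Concretely, the window constraint couples $d_{N_k}$ to the full prefix product $P_{N_k-1}=\prod_{j<N_k}d_j(d_j-1)$, requires the pruning $P_{N_k-1}\leq q_k^\ast/2$, and the surviving $d_{N_k}\approx q_k^\ast/P_{N_k-1}$ then ranges from $2$ up to $q_k^\ast/2^{N_k-1}$ across branches; the level-$N_k$ cylinders therefore have lengths spread over many orders of magnitude, so the mass-distribution estimate must handle arbitrary balls cutting across incomparable cylinders, and the comparable-generation arguments underlying the Cao--Wu--Zhang lower bound do not transfer directly. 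The balance between the free ``pressure'' (how much $Z_{D,s}=\sum_{d=2}^D(d(d-1))^{-s}$ exceeds $1$), the forced dilution ($d_{N_k+1}^2$ with $\log d_{N_k+1}\asymp(\tau+\epsilon)\log Q_{N_k}$, and $\log Q_{N_k}$ itself grows linearly with the number of free stages), and the pruning cost is delicate and behaves differently for $\tau<1$ (where $s>1/2$) versus $\tau\geq1$; nothing in the sketch rules out the balance landing short of $1/(1+\tau+\epsilon)$. Finally, a smaller slip: the approximation inequality from Proposition~\ref{prop: Luroth-Dirichlet} and the Tan--Zhou bound give $|x-P_n/Q_n|<1/((d_n-1)(d_{n+1}-1)Q_n)$, so the forced condition should be on $(d_{N_k}-1)(d_{N_k+1}-1)$, not on $d_{N_k+1}(d_{N_k}-1)$. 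Until the ball-level estimate is actually established, this remains a programme, not a proof; the fact that the paper's Theorem~\ref{thm: DL 6} still needs its extra divergence hypothesis is itself evidence that the general case is harder than the sketch implies.
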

Theorem~\ref{thm: DL 6} provides a partial result to the revised conjecture with three extra assumptions: a sufficiently large lower order, a stronger monotonicity condition, and the divergence of a specific series. Let $\psi:\mathbb{N}\to(0,1]$ and define $\theta_s:\mathbb{N}\to\mathbb{R}^+$ by, for any $q\in\mathbb{N}$,
\begin{align}
    \label{eq: theta s}
    \theta_s(q)\coloneqq q^{1-s}\psi^s(q),
\end{align}
where $s\coloneqq 1/(1+\underline{\tau}_{\psi})$. Note that if $\theta_s$ is non-increasing then $\psi$ is also non-increasing.
\begin{thm}\label{thm: DL 6}
    Let $\psi:\mathbb{N}\to(0,1]$. Suppose, $\underline{\tau}_{\psi}>1$, $\theta_s$ is non-increasing, and 
    \begin{align*}
        \sum_{q=1}^\infty\left(\frac{\psi(q)}{q}\right)^s\log{q}=+\infty,
    \end{align*}
    where $\underline{\tau}_\psi$ and $\theta_s$ are defined in \eqref{eq: tau} and \eqref{eq: theta s} respectively. Then,
    \begin{align*}
        \dim{L(\psi)}=\frac{1}{1+\underline{\tau}_{\psi}}.
    \end{align*}
\end{thm}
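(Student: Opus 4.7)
The inequality $\dim L(\psi)\le s$, with $s:=1/(1+\underline{\tau}_\psi)$, is immediate from the upper bound in Theorem~\ref{thm: DL 3}; the entire task is to establish the matching lower bound $\dim L(\psi)\ge s$. My plan is to follow the two-step template used for Theorem~\ref{thm: DL 2}: first show $\mathcal{L}(L(\theta_s))=1$ via Theorem~\ref{thm: DL 1}, then invoke the Beresnevich--Velani Mass Transference Principle to upgrade this to $\mathcal{H}^s(L(\psi))=+\infty$. The function $\theta_s$ defined in \eqref{eq: theta s} is tailored so that $(\theta_s(q)/q)^{1/s}=\psi(q)/q$, which identifies the interval of radius $\psi(Q_n(x))/Q_n(x)$ around $P_n(x)/Q_n(x)$ as the Mass Transference Principle $s$-shrinking of the interval of radius $\theta_s(Q_n(x))/Q_n(x)$. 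Thus, once $\mathcal{L}(L(\theta_s))=1$ is secured, the desired Hausdorff lower bound follows and, together with the upper bound, yields the claimed equality.

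To apply Theorem~\ref{thm: DL 1} to $\theta_s$, three items must be verified: that $\theta_s:\mathbb{N}\to(0,1]$, which follows from monotonicity together with $\theta_s(1)=\psi(1)^s\le1$; that $\theta_s$ is non-increasing, which is hypothesised; and that $\sum_{q}-\theta_s(q)\log\theta_s(q)/q=+\infty$. Introducing $\psi_0(q):=q^{\underline{\tau}_\psi}\psi(q)$ simplifies the bookkeeping: one has $\theta_s=\psi_0^s$ and $(\psi(q)/q)^s=\psi_0(q)^s/q$, so the hypothesised divergence rewrites as $\sum\psi_0(q)^s\log q/q=+\infty$ while the target divergence rewrites as $\sum -s\,\psi_0(q)^s\log\psi_0(q)/q=+\infty$. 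Note that $\psi_0$ is non-increasing with values in $(0,1]$, so $-\log\psi_0\ge0$.

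The main obstacle will be deducing the second divergence from the first. Their per-term ratio is $-\log\psi_0(q)/\log q$, which may shrink to $0$ along any subsequence on which $-\log\psi(q)/\log q$ approaches $\underline{\tau}_\psi$. I would attack this with a dyadic decomposition: group $q\in[2^k,2^{k+1})$ and use the monotonicity of $\psi_0$ to replace $\psi_0(q)$ by $\psi_0(2^k)$ on each block up to a bounded multiplicative factor, thereby isolating the blocks responsible for the divergence of the hypothesised sum. The strict inequality $\underline{\tau}_\psi>1$ (equivalently $s<1/2$), together with $\psi_0\le1$, provides the room to compare the two dyadic sums via Abel summation. If this comparison cannot be pushed through in full generality, the fallback is to bypass Theorem~\ref{thm: DL 1} entirely and build a Cantor subset of $L(\psi)$ by prescribing digits $d_{n+1}\asymp1/\psi(Q_n)$ along a subsequence of levels whose density is controlled by the hypothesised series $\sum(\psi(q)/q)^s\log q$, applying the mass distribution principle to obtain $\dim L(\psi)\ge s$ directly. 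Either route, once completed, combines with the Mass Transference Principle (or replaces it) to close out the lower bound.
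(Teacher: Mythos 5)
Your plan reproduces the paper's strategy verbatim: the upper bound $\dim L(\psi)\le s$ from Theorem~\ref{thm: DL 3}, and the lower bound by first showing $\mathcal L(L(\theta_s))=1$ via the divergence case of Theorem~\ref{thm: DL 1}, and then invoking Proposition~\ref{prop: AMTP}. (One step you do not mention, but which the paper isolates as Proposition~\ref{prop: good tau limsup}, is the rewriting of $L(\psi)$ as a $\limsup$ of intervals $\left(P_k/Q_k,\,P_k/Q_k+\psi(Q_k)/Q_k\right)$; this needs $\underline{\tau}_\psi>1$ so the shrinking balls stay inside the cylinders, and you will want it before you can contract by the factor $s$.) Both you and the paper are thus reduced to proving $\sum_q -\theta_s(q)\log\theta_s(q)/q=+\infty$. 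The paper dispatches this with a single claimed term-by-term bound, asserting that $\psi(q)<1/q$ (eventually, from $\underline{\tau}_\psi>1$) forces $-\log\theta_s(q)\ge\log q$; you correctly suspect a term-by-term argument cannot work and propose a dyadic/Abel-summation regrouping instead.

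Neither route can close this step, because the implication is genuinely false under the stated hypotheses. Take $\psi(q)=q^{-\tau}$ with $\tau=\underline{\tau}_\psi>1$: then $\theta_s\equiv1$, so $\sum_q-\theta_s(q)\log\theta_s(q)/q=0$, while $\sum_q(\psi(q)/q)^s\log q=\sum_q\log q/q=+\infty$ and every hypothesis of Theorem~\ref{thm: DL 6} holds. Concretely, $-\log\theta_s(q)\ge\log q$ is equivalent to $\psi(q)\le q^{-(1+2\underline{\tau}_\psi)}$, far stronger than $\psi(q)<1/q$, and a dyadic regrouping cannot create a divergent series out of the zero series. The fix requires splitting on $\ell\coloneqq\lim_q\theta_s(q)$: if $\ell>0$ (as in the example) then $L(\theta_s)\supset L(\ell)$, which has full measure because Lebesgue-a.e.\ $x$ has $d_n(x)>1/\ell+1$ infinitely often, so Theorem~\ref{thm: DL 1} is not needed; if $\ell=0$ then $\theta_s(q)\to0$ and a separate estimate tying $-\log\theta_s(q)$ to $\log q$ must be made, and it is not clear the hypothesised divergence suffices. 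Your fallback --- a direct Cantor/mass-distribution construction prescribing $d_{n+1}\asymp1/\psi(Q_n)$ along a controlled subsequence --- sidesteps the problem altogether and is likely the sounder route, but as written you leave it as a placeholder.
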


\section{Proofs}

\subsection{Proof of Theorem~\ref{thm: DL 1}}\label{Section: Luroth Lebesgue}

Theorem~\ref{thm: DL 1} presents a measure-theoretic result originally stated by Tan--Zhou \cite{tan2021approximation}, which forms an analogue of the Khintchine Theorem in the L\"uroth setting. While the original statement is claimed in \cite{tan2021approximation}, a direct proof is not fully provided.

Theorem~\ref{thm: DL 1} is logically equivalent to \cite[Corollary 1.3]{tan2021dimension}. The following demonstrates how the former is implied by the latter.

\begin{proof}[Proof of Theorem~\ref{thm: DL 1}]
By \cite[(4.1)]{tan2021dimension}, for any $x\in(0,1]$ and $n\in\mathbb{N}$,
\begin{align*}
    \frac{1}{(d_n(x)-1)d_{n+1}(x)}<\left|xQ_n(x)-P_n(x)\right|<\frac{1}{(d_n(x)-1)(d_{n+1}(x)-1)}.
\end{align*}
Since $d_n(x)\geq2$ for all $x\in(0,1]$ and $n\in\mathbb{N}$, the above implies that
\begin{align*}
    \frac{1}{d_n(x)d_{n+1}(x)}
    <|xQ_n(x)-P_n(x)|
    <\frac{4}{d_n(x)d_{n+1}(x)}.
\end{align*}
The following chain of set inclusions holds:
\begin{align*}
    &\limsup_{n\to\infty}\left\{x\in(0,1]:d_n(x)d_{n+1}(x)\geq\frac{4}{\psi(Q_n(x))}\right\} \\
    &\subset\limsup_{n\to\infty}\left\{x\in(0,1]:\left|xQ_n(x)-P_n(x)\right|<\psi(Q_n(x))\right\} = L(\psi) \\
    &\subset
    \limsup_{n\to\infty}\left\{x\in(0,1]:d_n(x)d_{n+1}(x)\geq\frac{1}{\psi(Q_n(x))}\right\}.
\end{align*}

Suppose, in the convergent case,
\begin{align*}
    \sum_{q=1}^{\infty}-\frac{\psi(q)\log{\psi(q)}}{q}<+\infty.
\end{align*}
In particular, $\lim_{q\to\infty}\psi(q)=0$. Define a non-decreasing $\varphi_0:\mathbb{N}\to[2,+\infty)$ by, for any $q\in\mathbb{N}$,
\begin{align*}
    \varphi_0(q)\coloneqq\max{\left(2,\frac{1}{\psi(q)}\right)}.
\end{align*}
Since $\lim_{q\to\infty}\psi(q)=0$, there exists a minimal $q_0\in\mathbb{N}$ such that for any $q\in\mathbb{N}$, if $q>q_0$ then $\varphi_0(q)={1}/{\psi(q)}$. By straightforward computation,
\begin{align*}
    \sum_{q=1}^\infty\frac{\log{\varphi_0(q)}}{q\varphi_0(q)}
    &=\sum_{q=1}^{q_0}\frac{\log{2}}{2q}
    +\sum_{q=q_0+1}^{\infty}-\frac{\psi(q)\log{\psi(q)}}{q} \\
    &\leq\sum_{q=1}^{q_0}\frac{\log{2}}{2q}
    +\sum_{q=1}^{\infty}-\frac{\psi(q)\log{\psi(q)}}{q}
    <+\infty.
\end{align*}
By the convergent part of \cite[Corollary 1.3]{tan2021dimension}, the set inclusion implies that $\mathcal{L}(L(\psi))=0$.

Suppose, in the divergent case,
\begin{align*}
    \sum_{q=1}^{\infty}-\frac{\psi(q)\log{\psi(q)}}{q}=+\infty.
\end{align*}
Define a non-decreasing $\varphi_1:\mathbb{N}\to[2,+\infty)$ by, for any $q\in\mathbb{N}$,
\begin{align*}
    \varphi_1(q)\coloneqq \frac{4}{\psi(q)}.
\end{align*}
By a straightforward computation,
\begin{align*}
    \sum_{q=1}^\infty\frac{\log{\varphi_1(q)}}{q\varphi_1(q)}
    &=\sum_{q=1}^{\infty}\frac{-\psi(q)\log{(\psi(q)/4)}}{4q} \\
    &\geq\sum_{q=1}^{\infty}\frac{-\psi(q)\log{\psi(q)}}{q}
    =+\infty.
\end{align*}
By the divergent part of \cite[Corollary 1.3]{tan2021dimension}, the set inclusion implies that $\mathcal{L}(L(\psi))=1$.

The proof of Theorem~\ref{thm: DL 1} is completed.
\end{proof}

\subsection{Proof of Theorem~\ref{thm: DL 2}}

Theorem~\ref{thm: DL 2} establishes the Hausdorff dimension of the set of L\"uroth \( \tau \)-well-approximable numbers, improving the result of Cao--Wu--Zhang, which forms an analogue of the Jarn\'ik--Besicovitch Theorem in the L\"uroth setting.

Proposition~\ref{prop: Luroth-Dirichlet} provides an inequality that every L\"uroth convergent satisfies.
\begin{prop}\label{prop: Luroth-Dirichlet}
For any $x\in(0,1]$ and $n\in\mathbb{N}$,
\begin{align*}
    0< x-\frac{P_n(x)}{Q_n(x)}\leq\frac{1}{d_n(x)-1}\frac{1}{Q_n(x)}.
\end{align*}
\end{prop}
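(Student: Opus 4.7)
The plan is to compute $x - P_n(x)/Q_n(x)$ directly as the tail of the L\"uroth series defining $x$, and then recognise what remains after extracting a common factor as the $n$-th iterate of $x$ under the L\"uroth map.

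First, subtracting the $n$-th partial sum from the full expansion \eqref{eq: Luroth Expa} gives
\begin{align*}
    x-\frac{P_n(x)}{Q_n(x)}=\sum_{k=n+1}^{\infty}\frac{1}{d_k\prod_{j=1}^{k-1}d_j(d_j-1)}.
\end{align*}
Each term is strictly positive because every $d_j\geq 2$, and the expansion of any $x\in(0,1]$ is infinite as stated in the preliminaries, so the strict lower bound is immediate.

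For the upper bound, I would factor out $\prod_{j=1}^{n}d_j(d_j-1)$ from every denominator in the tail. From \eqref{eq: Q_n(x)} one has the identity $\prod_{j=1}^{n}d_j(d_j-1)=(d_n(x)-1)Q_n(x)$, which lets me rewrite the tail as
\begin{align*}
    x-\frac{P_n(x)}{Q_n(x)}=\frac{1}{(d_n(x)-1)Q_n(x)}\sum_{m=1}^{\infty}\frac{1}{d_{n+m}\prod_{l=1}^{m-1}d_{n+l}(d_{n+l}-1)}.
\end{align*}
After the index shift $l=j-n$, the inner sum is exactly the L\"uroth expansion with digit sequence $(d_{n+1},d_{n+2},\ldots)$, which is $T^n(x)$, since the L\"uroth map $T$ acts as a shift on the digit sequence. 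Using $T^n(x)\in(0,1]$ then yields the claimed upper bound $x-P_n(x)/Q_n(x)\leq 1/((d_n(x)-1)Q_n(x))$.

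No step here is a genuine obstacle; the mildly fiddly point is the bookkeeping in the index shift that identifies the inner sum as $T^n(x)$, but this is standard. As a sanity check, a cruder geometric-series estimate also works: consecutive terms in the tail have ratio $1/(d_{k+1}(d_k-1))\leq 1/2$, so the tail is at most twice its first term, giving $x-P_n(x)/Q_n(x)\leq 2/(d_{n+1}(x)(d_n(x)-1)Q_n(x))\leq 1/((d_n(x)-1)Q_n(x))$. I would present the sharper $T^n(x)$ argument in the proof since it is cleaner and makes the equality case transparent.
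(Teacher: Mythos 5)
Your argument is correct and follows the paper's proof essentially verbatim: expand the difference as the tail of the L\"uroth series, factor out $(d_n(x)-1)Q_n(x)$, and bound the remaining sum by $1$. The only difference is cosmetic — you identify the remaining sum as $T^n(x)\in(0,1]$, while the paper simply observes directly that it lies in $(0,1]$ by the same formula.
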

\begin{proof}
Pick any $x\in(0,1]$ and $n\in\mathbb{N}$. By the definitions of $P_n(x)$ and $Q_n(x)$ in \eqref{eq: P_n(x)} and \eqref{eq: Q_n(x)},
\begin{align*}
    x-\frac{P_n(x)}{Q_n(x)}
    =\sum_{k={n+1}}^\infty\frac{1}{d_k\prod_{j=1}^{k-1}d_j(d_j-1)}
    = \frac{1}{d_n-1}\frac{1}{Q_n(x)}\sum_{k={n+1}}^\infty\frac{1}{d_k\prod_{j=n+1}^{k-1}d_j(d_j-1)},
\end{align*}
where the value of the summation in the rightmost expression lies in the interval $(0,1]$.
\end{proof}
Proposition~\ref{prop: Luroth-Dirichlet} states that every real number in $(0,1]$ is strictly greater than all of its L\"uroth convergents; in other words, all L\"uroth convergents approximate the real number from the left of the number line. Figure~\ref{fig: from the left} is an illustration.
\newlength{\mytikzwidth}
\setlength{\mytikzwidth}{0.95\textwidth}
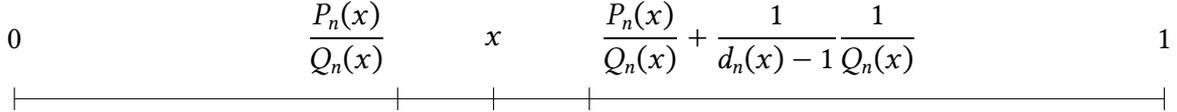
\begin{figure}[H]
\begin{center}
\begin{tikzpicture}
    \draw (0,0) -- (\mytikzwidth,0);
    \draw (0.000\mytikzwidth,-0.15) -- + (0,0.3); 
    \draw (1.000\mytikzwidth,-0.15) -- + (0,0.3);
    \draw (.5000\mytikzwidth,-0.15) -- + (0,0.3);
    \draw (.4167\mytikzwidth,-0.15) -- + (0,0.3);
    \draw (.3333\mytikzwidth,-0.15) -- + (0,0.3);
    \node at (0.0000\mytikzwidth,.8) {$0$};
    \node at (1.0000\mytikzwidth,.8) {$1$};
    \node at (0.5000\mytikzwidth,.8) [right] {$\displaystyle \frac{P_n(x)}{Q_n(x)}+\frac{1}{d_n(x)-1}\frac{1}{Q_n(x)}$};
    \node at (0.4167\mytikzwidth,.8) {$x$};
    \node at (0.3333\mytikzwidth,.8) [left] {$\displaystyle \frac{P_n(x)}{Q_n(x)}$};
\end{tikzpicture}
\end{center}
\caption{L\"uroth Convergents Approximate from the Left}
\label{fig: from the left}
\end{figure}

By Proposition~\ref{prop: Luroth-Dirichlet}, for any $\tau\geq0$, the set $L(\tau)$, defined in \eqref{eq: L tau}, can be expressed as:
\begin{align}
    \label{eq: L tau better}
    L(\tau)
    =\limsup_{n\to+\infty}
    \left\{x\in(0,1]:0<x-\frac{P_n(x)}{Q_n(x)}<\frac{1}{{Q_n(x)}^{1+\tau}}\right\}.
\end{align}
The proof of the upper bound for $\dim{L(\tau)}$ is established by a standard covering argument in fractal geometry.

\begin{prop}[Upper Bound of $\dim{L(\tau)}$] \label{prop: dim L tau upper}
For any $\tau\geq0$,
\begin{align*}
    \dim{L(\tau)}\leq\frac{1}{1+\tau}.
\end{align*}
\end{prop}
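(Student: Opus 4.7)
The plan is a direct $s$-dimensional Hausdorff measure computation: for every $s > 1/(1+\tau)$, exhibit covers of $L(\tau)$ of arbitrarily small diameter whose total $s$-cost tends to zero, exploiting the telescoping identity $\sum_{d \geq 2} 1/(d(d-1)) = 1$.

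First, for fixed digits $d_1, \ldots, d_n \geq 2$, the level-$n$ L\"uroth cylinder $\{x \in (0,1] : d_j(x) = d_j,\ 1 \leq j \leq n\}$ is an interval with left endpoint $P_n/Q_n$ (by \eqref{eq: P_n(x)} and \eqref{eq: Q_n(x)}); by Proposition~\ref{prop: Luroth-Dirichlet}, the subset of this cylinder on which $x - P_n/Q_n < 1/Q_n^{1+\tau}$ lies in an interval $B_n(d_1, \ldots, d_n)$ of length at most $1/Q_n^{1+\tau}$. Applying the limsup description \eqref{eq: L tau better}, for every $N \in \mathbb{N}$,
\begin{align*}
    L(\tau) \subset \bigcup_{n \geq N}\ \bigcup_{d_1, \ldots, d_n \geq 2} B_n(d_1, \ldots, d_n).
\end{align*}
The uniform bound $Q_n \geq 2^n$ guarantees that every diameter is $\leq 2^{-N(1+\tau)}$, so these are genuine $\delta$-covers for arbitrarily small $\delta$.

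Next, fix $s > 1/(1+\tau)$ and write $s(1+\tau) = 1 + \epsilon$ with $\epsilon > 0$. Using $Q_n = d_n \prod_{j=1}^{n-1} d_j(d_j-1)$, the inner sum factorises as
\begin{align*}
    \sum_{d_1, \ldots, d_n \geq 2} \frac{1}{Q_n^{1+\epsilon}}
    = D_\epsilon\, C_\epsilon^{n-1},\qquad
    D_\epsilon \coloneqq \sum_{d \geq 2} \frac{1}{d^{1+\epsilon}},\quad
    C_\epsilon \coloneqq \sum_{d \geq 2} \frac{1}{(d(d-1))^{1+\epsilon}}.
\end{align*}
The crucial input is $C_0 = 1$ by telescoping, hence $C_\epsilon < 1$ for any $\epsilon > 0$ (each summand strictly decreases in $\epsilon$ since $d(d-1) \geq 2$). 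Summing the resulting geometric tail gives $\mathcal{H}^s_\delta(L(\tau)) \leq D_\epsilon C_\epsilon^{N-1}/(1 - C_\epsilon) \to 0$ as $N \to \infty$, so $\mathcal{H}^s(L(\tau)) = 0$ and $\dim L(\tau) \leq s$; letting $s \downarrow 1/(1+\tau)$ concludes.

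No substantive obstacle is anticipated: the argument is a textbook covering estimate, and the only quantitative content is the telescoping bound $C_\epsilon < 1$, which drives the geometric decay of the partial sums. The routine point needing attention is verifying that the covers actually have diameters shrinking to zero, which is handled by $Q_n \geq 2^n$.
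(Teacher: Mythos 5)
Your proposal is correct and essentially the same as the paper's proof: both use the $\rho$-cover indexed by levels $n \geq N$ and digit strings, both factorise the $s$-cost as $D_\epsilon\, C_\epsilon^{n-1}$, and both drive the estimate by the geometric decay coming from $C_\epsilon < 1$. The only difference is cosmetic: you make explicit the telescoping identity $\sum_{d\geq 2} 1/(d(d-1)) = 1$ that justifies $C_\epsilon < 1$ for $\epsilon>0$, whereas the paper simply asserts $r_{\tau,s}\in(0,1)$ without comment, and the paper rounds the constant factor up to $\zeta((1+\tau)s)$ rather than keeping $D_\epsilon=\zeta(1+\epsilon)-1$.
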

\begin{proof}
Pick any $n\in\mathbb{N}$ and $d_1,d_2,\ldots,d_n\in\mathbb{N}\setminus\{1\}$. Define $Q_n\coloneqq d_1(d_1-1)\dots d_{n-1}(d_{n-1}-1)d_n\geq2^n$, $P_n\coloneqq [d_1,d_2,\ldots,d_n]Q_n$, and
\begin{align*}
    I_{n,\tau}(d_1,d_2,\ldots,d_n)
    \coloneqq
    \left(\frac{P_n}{Q_n},\frac{P_n}{Q_n}+\frac{1}{{Q_n}^{1+\tau}}\right).
\end{align*}
Note that the diameter of ${I_{n,\tau}}$ satisfies that $\operatorname{diam}{I_{n,\tau}}=1/{Q_n}^{1+\tau}\leq1/2^n$. Pick any $\rho>0$ and $N\in\mathbb{N}$. Suppose $N>-\log{\rho}/\log{2}$. By \eqref{eq: L tau better}, the following is a $\rho$-cover of $L(\tau)$.
\begin{align}
    \label{eq: rho cover DL 2}
    \bigcup_{n=N}^{\infty}\bigcup_{(d_1,\ldots,d_n)\in(\mathbb{N}\setminus\{1\})^n}
    I_{n,\tau}(d_1,d_2,\ldots,d_n).
\end{align}
Pick any $\varepsilon>0$ and $s>1/(1+\tau)$. Without loss of generality, $N\in\mathbb{N}$ is large enough in terms of $\tau\geq0$, $s>0$ and $\varepsilon>0$, so that
\begin{align*}
    \sum_{n=N}^{\infty}{r_{\tau,s}}^{n-1}<\frac{\varepsilon}{\zeta((1+\tau)s)}
    ,
\end{align*}
where $\zeta$ is the Riemann zeta function and
\begin{align*}
    r_{\tau,s}
    \coloneqq\sum_{d=2}^\infty\frac{1}{{\left(d(d-1)\right)}^{(1+\tau)s}}
    \in(0,1)
    .
\end{align*}
Hence, the $s$-Hausdorff measure for the $\rho$-cover \eqref{eq: rho cover DL 2} is obtained as
\begin{align*}
    \mathcal{H}^s_{\rho}(L(\tau))
    &\leq\sum_{n=N}^{\infty}\sum_{(d_1,\ldots,d_n)\in(\mathbb{N}\setminus\{1\})^n}\left(\operatorname{diam}I_{n,\tau}(d_1,\ldots,d_n)\right)^s \\
    &=\sum_{n=N}^{\infty}\sum_{(d_1,\ldots,d_n)\in(\mathbb{N}\setminus\{1\})^n}\frac{1}{{Q_{n}(d_1,\ldots,d_n)}^{(1+\tau)s}} \\
    &=\sum_{n=N}^{\infty}\sum_{(d_1,\ldots,d_n)\in(\mathbb{N}\setminus\{1\})^n}\frac{1}{{\left(d_1(d_1-1)\cdots d_{n-1}(d_{n-1}-1)d_n\right)}^{(1+\tau)s}} \\
    &=\sum_{n=N}^{\infty}\left(\sum_{d=2}^\infty\frac{1}{{\left(d(d-1)\right)}^{(1+\tau)s}}\right)^{n-1}\left(\sum_{d=2}^\infty\frac{1}{{d}^{(1+\tau)s}}\right) \\
    &<\zeta((1+\tau)s)\sum_{n=N}^{\infty}{r_{\tau,s}}^{n-1}
    <\varepsilon
    .
\end{align*}
Thus, for any $s>1/(1+\tau)$, $\mathcal{H}^s(L(\tau))=0$. The result follows from the definition of Hausdorff dimension.
\end{proof}

The proof of the lower bound for $\dim{L(\tau)}$ follows from applying the Beresnevich--Velani Mass Transference Principle \cite[Theorem 2]{beresnevich2006mass}, providing an alternative to the method presented in \cite{cao2013efficiency}. Proposition~\ref{prop: AMTP} is sufficient for the rest of the paper.
\begin{prop}[Applied Mass Transference Principle]\label{prop: AMTP}
    Let $(I_n)_{n\in\mathbb{N}}$ be a sequence of intervals in $[0,1]$ and $0\leq s<1$. Suppose $\lim_{n\to+\infty}\operatorname{diam}{I_n}=0$ and
    \begin{align*}
        \mathcal{L}\left([0,1]\cap\limsup_{n\to+\infty}{I_n}^s\right)=1,
    \end{align*}
    where $I^s$ denotes the interval with the same centre as $I$ and with radius equal to the radius of $I$ raised to the power of $s$.
    Then
    \begin{align*}
        \mathcal{H}^s\left(\limsup_{n\to+\infty}{I_n}\right)=+\infty;
    \end{align*}
    in particular, 
    \begin{align*}
        \dim{\left(\limsup_{n\to+\infty}{I_n}\right)}\geq s.
    \end{align*}
\end{prop}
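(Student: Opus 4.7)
The plan is to deduce Proposition~\ref{prop: AMTP} as an immediate one-dimensional specialisation of the Beresnevich--Velani Mass Transference Principle \cite[Theorem 2]{beresnevich2006mass} applied with a power gauge. Recall that the MTP operates in $\mathbb{R}^k$ with a dimension function $f$ for which $r^{-k}f(r)$ is monotonic, and considers rescaled balls $B^f$ obtained by replacing the radius $r$ of each ball $B$ by $f(r)^{1/k}$.

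First, I would take $k=1$ and set $f(r)\coloneqq r^s$. The assumption $0\leq s<1$ ensures $r^{-1}f(r)=r^{s-1}$ is monotonic on $\mathbb{R}^+$, so $f$ is an admissible gauge in the MTP framework. In $\mathbb{R}$, the rescaled interval $B^f$ has radius $\text{rad}(B)^s$, which matches exactly the $I^s$ convention in the statement of the proposition. Under this dictionary, the scaled-ball hypothesis of the MTP coincides verbatim with the full-measure hypothesis of the proposition.

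Next, I would note that $\mathcal{L}([0,1]\cap\limsup_{n\to+\infty}I_n^s)=1$ means that $[0,1]\setminus\limsup_{n\to+\infty}I_n^s$ is Lebesgue null, and therefore $\mathcal{L}(B\cap\limsup_{n\to+\infty}I_n^s)=\mathcal{L}(B)$ for every sub-interval $B\subset[0,1]$, in particular for $B=[0,1]$ itself viewed as a ball in $\mathbb{R}$. The MTP then outputs $\mathcal{H}^f\bigl([0,1]\cap\limsup_{n\to+\infty}I_n\bigr)=\mathcal{H}^f([0,1])$. Since the generalised Hausdorff measure $\mathcal{H}^f$ coincides with the standard $\mathcal{H}^s$ for the power gauge $f(r)=r^s$, and since $s<1$ forces $\mathcal{H}^s([0,1])=+\infty$, monotonicity of Hausdorff measure yields $\mathcal{H}^s(\limsup_{n\to+\infty}I_n)=+\infty$; the dimension bound is immediate from the definition of Hausdorff dimension.

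Because the proposition is essentially a repackaging of the MTP in the convenient one-dimensional power-gauge setting, I do not foresee any substantive analytic obstacle. The only points to verify carefully are the admissibility of the gauge (immediate from $s<1$) and the alignment of the $B^f$ and $I^s$ scaling conventions (automatic once $k=1$); everything else is bookkeeping.
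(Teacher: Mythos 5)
Your overall strategy coincides with the paper's (apply the Beresnevich--Velani Mass Transference Principle with the power gauge $f(r)=r^s$ in $\mathbb{R}^1$), but there is a genuine gap in the way you invoke the theorem. Theorem~2 of \cite{beresnevich2006mass} requires the full-measure hypothesis to hold for \emph{every} ball $B$ in $\mathbb{R}^k$, i.e.\ $\mathcal{H}^1\left(B\cap\limsup_{n\to\infty}I_n^s\right)=\mathcal{H}^1(B)$ for all intervals $B\subset\mathbb{R}$, not merely for $B=[0,1]$ and its sub-intervals. You only verify the latter. Since the $I_n$ all lie in $[0,1]$ and $\operatorname{diam}I_n\to0$ (hence $\operatorname{diam}I_n^s\to0$ as well, at least for $s>0$), the rescaled intervals are eventually confined to a fixed neighbourhood of $[0,1]$, so for a ball such as $B=(5,6)$ one has $B\cap\limsup_{n\to\infty}I_n^s=\emptyset$ and the global hypothesis of the MTP fails outright. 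Consequently the theorem cannot be applied as you propose.

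The paper repairs exactly this point: it forms the $1$-periodic extension $Y_s=\bigcup_{z\in\mathbb{Z}}(X_s+z)$ of $X_s=\limsup_{n\to\infty}\left([0,1)\cap I_n^s\right)$, checks via countable additivity that $\mathcal{L}(I\cap Y_s)=\mathcal{L}(I)$ for every interval $I\subset\mathbb{R}$, re-enumerates the translated intervals so that $Y_s=\limsup_{n\to\infty}B_n^s$ with $\operatorname{diam}B_n\to0$, and only then invokes Theorem~2 of \cite{beresnevich2006mass} to conclude $\mathcal{H}^s([0,1]\cap Y_s)=+\infty$, finally restricting back to $[0,1]$ to obtain the desired bound. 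This periodisation is not bookkeeping: without it the cited theorem's hypothesis is simply false, so your argument as written does not go through.
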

\begin{proof}
    The proof essentially applies \cite[Theorem 2]{beresnevich2006mass}.
    
    Suppose $0<s<1$. Define $f_s:\mathbb{R}^+\to\mathbb{R}^+$ by, for any $x\in\mathbb{R}^+$, $f_s(x)\coloneqq x^s$. $f_s$ is a dimension function, that is, $f_s$ is non-decreasing and $\lim_{x\to0^+}f_s(x)=0$. Define $X_s$ and its 1-periodic extension $Y_s$ by,
    \begin{align*}
        X_s\coloneqq\limsup_{n\to+\infty}\left([0,1)\cap{I_n}^s\right), &&
        Y_s\coloneqq\bigcup_{z\in\mathbb{Z}}\left(X_s+z\right),
    \end{align*}
    where $X_s+z$ is the translation of $X_s$ by $z$, defined as $X_s+z\coloneqq\{x+z:x\in X_s\}$.

    Pick any interval $I\subset\mathbb{R}$. Pick any $z\in\mathbb{Z}$. By assumption, $\mathcal{L}(X_s)=1$, implying $\mathcal{L}([z,z+1)\cap Y_s)=1$ and $\mathcal{L}([z,z+1)\setminus Y_s)=0$. By the additivity of Lebesgue measure,
    \begin{align*}
        \mathcal{L}\left(I\cap[z,z+1)\cap Y_s\right)
        =\mathcal{L}(I\cap[z,z+1))-\mathcal{L}(I\cap[z,z+1)\setminus Y_s)=\mathcal{L}(I\cap[z,z+1)).
    \end{align*}
    By the countable additivity of Lebesgue measure, the above implies that $\mathcal{L}(I\cap Y_s)=\mathcal{L}(I)$. By a re-enumeration, $Y_s=\limsup_{n\to\infty}{B_n}^s$, where for any $n\in\mathbb{N}$, $B_n$ is an interval, and $\lim_{n\to+\infty}\operatorname{diam}{B_n}=0$. Thus, the assumption of \cite[Theorem 2]{beresnevich2006mass} is satisfied and its conclusion implies that,
   \begin{align*}
        \mathcal{H}^s\left([0,1]\cap Y_s\right)=\mathcal{H}^s([0,1])=+\infty.
    \end{align*}
    By $X_s=[0,1)\cap Y_s$, the above implies that
    \begin{align*}
        \mathcal{H}^s\left(\limsup_{n\to+\infty}{I_n}\right)\geq\mathcal{H}^s\left(X_s\right)=+\infty.
    \end{align*}
    The result follows from the definition of Hausdorff dimension.
\end{proof}

In order to apply Proposition~\ref{prop: AMTP} to establish a lower bound for $\dim{L(\tau)}$, $L(\tau)$ should be expressed as a $\limsup$ set of intervals. Define $S$ to be the set of all 3-tuples consisting of the L\"uroth numerators, denominators, and last digits. Formally,
\begin{align}
\label{eq: def S}
    S\coloneqq\bigcup_{n\in\mathbb{N}}\bigcup_{x\in(0,1]}\left(P_n(x),Q_n(x),d_n(x)\right)
    ,
\end{align}
where for any $n\in\mathbb{N}$ and $x\in(0,1]$, $P_n(x)$ and $Q_n(x)$ are defined in \eqref{eq: P_n(x)} and \eqref{eq: Q_n(x)} respectively, and $d_n(x)=d_n$ is given in \eqref{eq: Luroth Repr}.
Note that $S$ is countable. Let
\begin{align}
\label{eq: S re-enumeration}
    S=((P_k,Q_k,d_k))_{k\in\mathbb{N}}
\end{align}
be a re-enumeration so that $(Q_k)_{k\in\mathbb{N}}$ is non-decreasing. Proposition~\ref{Luroth, supset} can be compared with Proposition~\ref{prop: Luroth-Dirichlet}, stating that satisfying an inequality implies that the fraction is a L\"uroth convergent.
\begin{prop}\label{Luroth, supset}
For any $x\in(0,1]$ and $k\in\mathbb{N}$, if
\begin{align*}
    0<x-\frac{P_k}{Q_k}\leq\frac{1}{(d_k-1)Q_k},
\end{align*}
then $P_k/Q_k$ is a L\"uroth convergent of $x$.
\end{prop}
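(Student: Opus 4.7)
The plan is to show that the assumed inequality forces $x$ to lie in the L\"uroth cylinder associated with the digit string encoded by the triple $(P_k, Q_k, d_k)$, which will immediately identify $P_k/Q_k$ as a L\"uroth convergent of $x$.

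By the definition of $S$ in~\eqref{eq: def S}, I would first unpack $(P_k, Q_k, d_k)$ as $(P_n(y), Q_n(y), d_n(y))$ for some $y \in (0,1]$ and $n \in \mathbb{N}$, thereby extracting digits $e_1, \ldots, e_n \in \mathbb{N}\setminus\{1\}$ with $e_n = d_k$ that determine $P_k$ and $Q_k$ via \eqref{eq: P_n(x)}--\eqref{eq: Q_n(x)}. Next, I would refine the computation in the proof of Proposition~\ref{prop: Luroth-Dirichlet} to establish the exact identity
\begin{align*}
    z - \frac{P_k}{Q_k} = \frac{T^n(z)}{(d_k-1)Q_k}
\end{align*}
for every $z \in (0,1]$ whose first $n$ L\"uroth digits coincide with $e_1, \ldots, e_n$; the tail sum appearing in that proof is precisely the L\"uroth expansion of $T^n(z)$, which lies in $(0,1]$.

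Then I would verify that the $n$-fold iterate $T^n$ maps the cylinder
\begin{align*}
    C(e_1, \ldots, e_n) \coloneqq \{z \in (0,1] : d_i(z) = e_i,\ i = 1, \ldots, n\}
\end{align*}
bijectively onto $(0,1]$, which follows by iterating the observation that each branch of $T$ sends $(1/e, 1/(e-1)]$ bijectively onto $(0,1]$, as visible from Figure~\ref{fig: Luroth map}. Combining this surjectivity with the displayed identity pins down the cylinder as the half-open interval
\begin{align*}
    C(e_1, \ldots, e_n) = \left(\frac{P_k}{Q_k},\ \frac{P_k}{Q_k} + \frac{1}{(d_k-1)Q_k}\right].
\end{align*}
The hypothesis of the proposition is exactly the statement $x \in C(e_1, \ldots, e_n)$, so $d_i(x) = e_i$ for $i = 1, \ldots, n$, yielding $P_n(x) = P_k$ and $Q_n(x) = Q_k$ as required.

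The only point requiring care is the right endpoint $x = P_k/Q_k + 1/((d_k-1)Q_k)$, which is included by the non-strict inequality in the hypothesis; I would check that it genuinely belongs to $C(e_1,\ldots,e_n)$ by exhibiting its L\"uroth representation as $[e_1, \ldots, e_n, 2, 2, 2, \ldots]$, corresponding to $T^n(x) = 1$. Beyond this boundary verification, the argument is bookkeeping around the standard bijection between $(0,1]$ and digit sequences in $(\mathbb{N}\setminus\{1\})^{\mathbb{N}}$.
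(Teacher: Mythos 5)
Your proof is correct and takes essentially the same approach as the paper: identify the hypothesis interval with the depth-$n$ L\"uroth cylinder whose left endpoint is $P_k/Q_k$, so that membership in that interval forces the first $n$ digits of $x$ to match. Your version is more explicit than the paper's, which only records the cylinder's length $1/((d_k-1)Q_k)$ before asserting the conclusion, whereas you also pin down the cylinder's position via the identity $z - P_k/Q_k = T^n(z)/((d_k-1)Q_k)$ and the surjectivity of $T^n$ on the cylinder, and you separately check the right endpoint; these are gaps worth filling but not a different argument.
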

\begin{proof}
By the definition of $S$, there exist $y\in(0,1]$ and $n\in\mathbb{N}$ such that $Q_k=Q_n(y)$. By the definition, there exist $d_1,d_2,\ldots,d_n\in\mathbb{N}\setminus\{1\}$ such that 
\begin{align*}
    Q_k
    =d_n\prod_{i=1}^{n-1}d_i(d_i-1)
    =d_1(d_1-1)d_2(d_2-1)\cdots d_{n-1}(d_{n-1}-1)d_n.
\end{align*}
The length of the cylinder:
\begin{align*}
    \left\{z\in(0,1]:d_1(z)=d_1,d_2(z)=d_2,\ldots,d_n(z)=d_n\right\}.
\end{align*}
for $[d_1,d_2,\ldots,d_n]$ is given by
\begin{align*}
    \frac{1}{\prod_{i=1}^{n}d_i(d_i-1)}
    =\frac{1}{(d_n-1)Q}.
\end{align*}
Therefore, $P_k/Q_k$ is the $n$-th L\"uroth convergent of $x$.
\end{proof}
Define, for any $\tau\geq0$, a sequence of intervals $(B_{\tau,k})_{k\in\mathbb{N}}$ in $[0,1]$ by, for any $k\in\mathbb{N}$, 
\begin{align}
    \label{eq: B tau k}
    B_{\tau,k}
    \coloneqq\left(\frac{P_k}{Q_k},\frac{P_k}{Q_k}+\frac{1}{{Q_k}^{1+\tau}}\right)\cap\left(\frac{P_k}{Q_k},\frac{P_k}{Q_k}+\frac{1}{(d_k-1)Q_k}\right].
\end{align}
\begin{prop}\label{prop: Luroth, limsup}
For any $\tau\geq0$,
\begin{align}\label{eq: Luroth, limsup}
    L(\tau)
    &=\limsup_{k\to+\infty}B_{\tau,k}.
\end{align}
\end{prop}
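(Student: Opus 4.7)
The plan is to prove the two inclusions separately, using Proposition~\ref{prop: Luroth-Dirichlet} for one direction and Proposition~\ref{Luroth, supset} for the other, with the set $S$ from \eqref{eq: def S} and its re-enumeration \eqref{eq: S re-enumeration} providing the bridge between ``indexed by $n$'' (as in $L(\tau)$) and ``indexed by $k$'' (as in $\limsup_{k}B_{\tau,k}$).

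For the inclusion $L(\tau)\subset\limsup_{k}B_{\tau,k}$, pick any $x\in L(\tau)$. By the expression \eqref{eq: L tau better} for $L(\tau)$, there are infinitely many $n\in\mathbb{N}$ with $0<x-P_n(x)/Q_n(x)<1/Q_n(x)^{1+\tau}$. For each such $n$, Proposition~\ref{prop: Luroth-Dirichlet} additionally supplies the bound $x-P_n(x)/Q_n(x)\leq 1/((d_n(x)-1)Q_n(x))$, so that $x$ lies in the intersection defining $B_{\tau,k}$ once we identify the unique index $k$ for which $(P_k,Q_k,d_k)=(P_n(x),Q_n(x),d_n(x))$ in the re-enumeration \eqref{eq: S re-enumeration}. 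Since $Q_n(x)\geq 2^n$ is strictly increasing in $n$, distinct values of $n$ produce distinct triples and hence distinct indices $k$, so $x$ belongs to infinitely many $B_{\tau,k}$, i.e.\ $x\in\limsup_{k}B_{\tau,k}$.

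For the reverse inclusion $\limsup_{k}B_{\tau,k}\subset L(\tau)$, pick any $x$ lying in infinitely many $B_{\tau,k}$. For each such $k$, the second factor in the definition \eqref{eq: B tau k} of $B_{\tau,k}$ ensures $0<x-P_k/Q_k\leq 1/((d_k-1)Q_k)$, so by Proposition~\ref{Luroth, supset}, $P_k/Q_k$ is a L\"uroth convergent of $x$; say it is the $n_k$-th one, meaning $(P_k,Q_k,d_k)=(P_{n_k}(x),Q_{n_k}(x),d_{n_k}(x))$. The first factor in \eqref{eq: B tau k} then gives $0<x-P_{n_k}(x)/Q_{n_k}(x)<1/Q_{n_k}(x)^{1+\tau}$. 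Because distinct $k$ correspond to distinct triples in the re-enumeration, and the L\"uroth convergents of a fixed $x$ are determined by their index, distinct $k$ yield distinct $n_k$. Hence the defining inequality of $L(\tau)$ holds for infinitely many $n$, proving $x\in L(\tau)$.

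I expect the only real care is in the bookkeeping between the $n$-index and the $k$-index via the re-enumeration: one must check that ``infinitely many $n$'' maps injectively to ``infinitely many $k$'' (using the strict growth $Q_n(x)\geq 2^n$) and, conversely, that ``infinitely many $k$'' in $B_{\tau,k}$ produces infinitely many genuine L\"uroth-convergent indices for $x$ (using the fact that the re-enumeration lists each triple once and that an $x$ determines its convergent index from the triple). Apart from this combinatorial check, both directions are immediate consequences of the two propositions already established.
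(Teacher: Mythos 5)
Your proof is correct and follows the same route the paper takes: the inclusion $\subset$ uses \eqref{eq: L tau better} together with Proposition~\ref{prop: Luroth-Dirichlet} to land in both factors of $B_{\tau,k}$, and the inclusion $\supset$ uses Proposition~\ref{Luroth, supset}; you have simply made explicit the index bookkeeping between $n$ and $k$ that the paper leaves implicit.
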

\begin{proof}
    The inclusion $\subset$ follows directly from \eqref{eq: L tau better} with the definitions of $S$ and $B_{\tau,k}$ in \eqref{eq: def S} and \eqref{eq: B tau k} respectively. The reverse inclusion $\supset$ follows from Proposition~\ref{Luroth, supset}.
\end{proof}

\begin{prop}[Lower Bound of $\dim{L(\tau)}$]\label{prop: dim L tau lower}
For any $\tau\geq0$,
\begin{align*}
    \dim{L(\tau)}\geq\frac{1}{1+\tau}.
\end{align*}
\end{prop}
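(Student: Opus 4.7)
The plan is to apply Proposition~\ref{prop: AMTP} to the representation $L(\tau) = \limsup_{k\to\infty} B_{\tau,k}$ furnished by Proposition~\ref{prop: Luroth, limsup}, taking $s \coloneqq 1/(1+\tau)$. The edge case $\tau = 0$ (so $s = 1$) falls outside the range of Proposition~\ref{prop: AMTP} and will be handled separately by a direct Lebesgue-measure argument.

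For $\tau > 0$, the decay hypothesis $\lim_{k\to+\infty}\operatorname{diam} B_{\tau,k} = 0$ is immediate from $\operatorname{diam} B_{\tau,k} \leq 1/{Q_k}^{1+\tau}$ and $Q_k \to +\infty$ along the re-enumeration~\eqref{eq: S re-enumeration}. The core work is to verify
\begin{align*}
    \mathcal{L}\left([0,1] \cap \limsup_{k\to+\infty} {B_{\tau,k}}^s\right) = 1.
\end{align*}
I would first establish the geometric claim that whenever $d_k \geq 3$, the rescaled interval ${B_{\tau,k}}^s$ contains the entire L\"uroth cylinder $(P_k/Q_k, P_k/Q_k + 1/((d_k-1)Q_k)]$. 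The verification is a short calculation splitting on whether $d_k - 1 \leq {Q_k}^\tau$: in that case $B_{\tau,k}$ has radius $r_k = 1/(2{Q_k}^{1+\tau})$ and the algebraic identity $(1+\tau)s = 1$ gives ${r_k}^s = 1/(2^s Q_k)$, which exceeds the cylinder radius $1/(2(d_k-1)Q_k)$ as soon as $d_k - 1 \geq 2 > 2^s$; otherwise $B_{\tau,k}$ already equals the cylinder, and $r_k < 1$ combined with $s < 1$ gives ${B_{\tau,k}}^s \supseteq B_{\tau,k}$ automatically.

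To obtain the full-measure hypothesis, I would use that under Lebesgue measure on $(0,1]$ the L\"uroth digits are i.i.d.\ with $\mathcal{L}(\{x : d_n(x) = j\}) = 1/(j(j-1))$ for each $j \geq 2$; in particular $\mathcal{L}(\{x : d_n(x) \geq 3\}) = 1/2$, and the second Borel--Cantelli Lemma gives $d_n(x) \geq 3$ for infinitely many $n$ for a.e.\ $x$. For each such $n$, the triple $(P_n(x), Q_n(x), d_n(x))$ appears in $S$ as some $(P_k, Q_k, d_k)$, and the geometric claim places the $n$-th cylinder of $x$ (and hence $x$ itself) inside ${B_{\tau,k}}^s$. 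Proposition~\ref{prop: AMTP} now yields $\mathcal{H}^s(L(\tau)) = +\infty$ and, in particular, $\dim L(\tau) \geq 1/(1+\tau)$.

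For $\tau = 0$, the same digit observation settles the bound directly: when $d_n(x) \geq 3$, Proposition~\ref{prop: Luroth-Dirichlet} gives $0 < x - P_n(x)/Q_n(x) \leq 1/(2Q_n(x)) < 1/Q_n(x)$, so $d_n(x) \geq 3$ occurring infinitely often forces $x \in L(0)$, showing $\mathcal{L}(L(0)) = 1$ and thus $\dim L(0) = 1$. The main technical delicacy throughout lies in the cylinder-containment calculation, whose tightness rests on the identity $(1+\tau)s = 1$ and on the bound $d_k \geq 3$---the latter being precisely what the generic digit value $d_k = 2$ fails to supply.
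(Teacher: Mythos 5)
Your proposal is correct and follows essentially the same route as the paper: express $L(\tau)$ via Proposition~\ref{prop: Luroth, limsup} as $\limsup_k B_{\tau,k}$, verify a rescaling containment to meet the hypothesis of Proposition~\ref{prop: AMTP} with $s=1/(1+\tau)$, and treat $\tau=0$ separately. Two points of difference deserve mention. First, where the paper asserts $(P_k/Q_k,\,P_k/Q_k+1/Q_k)\subset{B_{\tau,k}}^{1/(1+\tau)}$ for every $k$, your sharper claim---that ${B_{\tau,k}}^s$ contains the cylinder $(P_k/Q_k,\,P_k/Q_k+1/((d_k-1)Q_k)]$ whenever $d_k\geq 3$---is in fact the inclusion that withstands scrutiny: the paper's version fails already for $\tau=1$, $Q_k=2$, $d_k=2$ (right endpoint $1/2$ versus $1/8+1/(2\sqrt2)<1/2$), whereas yours is exactly what the argument needs, since by Proposition~\ref{prop: Luroth-Dirichlet} the point $x$ lies in that cylinder. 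Second, you invoke the second Borel--Cantelli lemma for the i.i.d.\ L\"uroth digits, while the paper uses the simpler deterministic fact that every irrational $x\in(0,1]$ satisfies $d_n(x)\geq 3$ infinitely often; either observation delivers the full-measure hypothesis. One small caution on your ``short calculation'': because the cylinder and ${B_{\tau,k}}^s$ have different centres, comparing ${r_k}^s$ with the cylinder \emph{radius} is not by itself decisive; the endpoint comparison reduces instead to ${r_k}^s\geq 1/((d_k-1)Q_k)$, i.e.\ $d_k-1\geq 2^s$, which your hypothesis $d_k\geq 3$ does supply since $2^s<2$, so the conclusion stands.
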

\begin{proof}
Pick any $x\in(0,1]\setminus\mathbb{Q}$. Since $x$ is irrational, there exist infinitely many $n\in\mathbb{N}$ such that $d_n(x)\geq3$. By Proposition~\ref{prop: Luroth-Dirichlet}, for any $n\in\mathbb{N}$, if $d_n(x)\geq3$ then
\begin{align}
    \label{eq: Luroth-Dirichlet irrational}
    0< x-\frac{P_n(x)}{Q_n(x)}<\frac{1}{Q_n(x)}.
\end{align}
By \eqref{eq: L tau better}, $(0,1]\setminus\mathbb{Q}\subset L(0)$ and consequently $\dim{L(0)}=1$.

Suppose $\tau>0$. By a straightforward computation, for any $k\in\mathbb{N}$,
\begin{align*}
    \left(\frac{P_k}{Q_k},\frac{P_k}{Q_k}+\frac{1}{Q_k}\right)
    &\subset{B_{\tau,k}}^{1/(1+\tau)}
    ,
\end{align*}
where $B^s$ denotes the interval with the same centre as $B$ and with radius equal to the radius of $B$ raised to the power of $s$. Since for any $x\in(0,1]\setminus\mathbb{Q}$, there exist infinitely many $n\in\mathbb{N}$ such that \eqref{eq: Luroth-Dirichlet irrational} holds, implying that, 
\begin{align*}
    (0,1]\setminus\mathbb{Q}
    &\subset\left((0,1]\setminus\mathbb{Q}\right)\cap\limsup_{k\to+\infty}\left(\frac{P_k}{Q_k},\frac{P_k}{Q_k}+\frac{1}{Q_k}\right) \\
    &\subset\left((0,1]\setminus\mathbb{Q}\right)\cap\limsup_{k\to+\infty}{B_{\tau,k}}^{1/(1+\tau)}
    .
\end{align*}

By the re-enumeration on $S$ in \eqref{eq: S re-enumeration}, $\lim_{k\to+\infty}Q_k=+\infty$ and consequently $\lim_{k\to+\infty}\operatorname{diam}{B_{\tau,k}}=0$. The assumption in Proposition~\ref{prop: AMTP} is satisfied by that,
\begin{align*}
    1
    =\mathcal{L}\left((0,1]\setminus\mathbb{Q}\right)
    \leq
    \mathcal{L}\left((0,1]\setminus\mathbb{Q}\cap\limsup_{k\to+\infty}{B_{\tau,k}}^{1/(1+\tau)}\right)\leq 1.
\end{align*}
By the conclusion in Proposition~\ref{prop: AMTP} and \eqref{eq: Luroth, limsup},
\begin{align}
    \label{eq: H measure of L(tau)}
    \mathcal{H}^{1/(1+\tau)}{(L(\tau))}
    =\mathcal{H}^{1/(1+\tau)}\left(\limsup_{k\to+\infty}{B_{\tau,k}}\right)
    =+\infty;
\end{align}
in particular, 
\begin{align*}
    \dim{L(\tau)}
    =
    \dim{\left(\limsup_{k\to+\infty}B_{\tau,k}\right)}
    \geq
    \frac{1}{1+\tau}.
\end{align*}
Thus, the lower bound of $\dim{L(\tau)}$ is established.
\end{proof}

\begin{proof}[Proof of Theorem~\ref{thm: DL 2}]
By combining Proposition~\ref{prop: dim L tau upper} and \ref{prop: dim L tau lower}, the Hausdorff dimension of $L(\tau)$ is established for all $\tau\geq0$. By \eqref{eq: H measure of L(tau)}, the Hausdorff measure of $L(\tau)$ at the critical exponent is obtained.

The proof of Theorem~\ref{thm: DL 2} is completed.
\end{proof}

\subsection{Proof of Theorem~\ref{thm: DL 3}}

Theorem~\ref{thm: DL 3} seems to be a stronger result than Theorem~\ref{thm: DL 2}, the proof actually demonstrates that the former follows directly by applying latter twice.

\begin{proof}[Proof of Theorem~\ref{thm: DL 3}]
Suppose $\underline{\tau}_\psi>0$. Pick any $0<\varepsilon<\underline{\tau}_\psi$. By the definition of $\underline{\tau}_\psi$, there exists $N_\varepsilon\in\mathbb{N}$ such that for any $q\in\mathbb{N}$, if $q>N_\varepsilon$ then
\begin{align*}
    0<\underline{\tau}_\psi-\varepsilon&<-\frac{\log{\psi(q)}}{\log{q}} \\
    \psi(q)&<\frac{1}{q^{\underline{\tau}_\psi-\varepsilon}}.
\end{align*}
Thus, $L(\psi)\subset L(\underline{\tau}_\psi-\varepsilon)$. By Theorem~\ref{thm: DL 2} and the monotonicity of Hausdorff dimension,
\begin{align*}
    \dim{L(\psi)}
    \leq \dim{L(\underline{\tau}_\psi-\varepsilon)}
    = \frac{1}{1+\underline{\tau}_\psi-\varepsilon}
    .
\end{align*}
Since $\varepsilon>0$ is arbitrary, the upper bound is established.

Pick any $\varepsilon>0$. By the definition of $\overline{\tau}_\psi$, there exists $N_\varepsilon\in\mathbb{N}$ such that for any $q\in\mathbb{N}$, if $q>N_\varepsilon$ then
\begin{align*}
    -\frac{\log{\psi(q)}}{\log{q}}&<\overline{\tau}_\psi+\varepsilon \\
    \frac{1}{q^{\overline{\tau}_\psi+\varepsilon}}&<\psi(q).
\end{align*}
Thus, $L(\overline{\tau}_\psi+\varepsilon)\subset L(\psi)$. By Theorem~\ref{thm: DL 2} and the monotonicity of Hausdorff dimension,
\begin{align*}
    \frac{1}{1+\overline{\tau}_\psi+\varepsilon}
    =\dim{L(\overline{\tau}_\psi+\varepsilon)}
    \leq \dim{L(\psi)}
    .
\end{align*}
Since $\varepsilon>0$ is arbitrary, the lower bound is established.

The proof of Theorem~\ref{thm: DL 3} is completed.
\end{proof}

\subsection{Proof of Theorem~\ref{thm: DL 4}}

Theorem~\ref{thm: DL 4} refines the general dimensional bounds established in Theorem~\ref{thm: DL 3}. The proof for the upper bound follows a similar approach as in Theorem~\ref{thm: DL 3}; and for the lower bound, Proposition~\ref{prop: AMTP} is applied once again.

\begin{proof}[Proof of Theorem~\ref{thm: DL 4}]\label{proof: Luroth Lee dim double bounds better}

Suppose $\underline{\lambda}_\psi>0$. Pick any $0<\varepsilon<\underline{\lambda}_\psi$. By the definition of $\underline{\lambda}_\psi$, there exists $K_\varepsilon\in\mathbb{N}$ such that for any $k\in\mathbb{N}$, there exists $q_k\in\mathbb{N}$ such that for any $q\in S_k$, if $k>K_\varepsilon$ and $q>q_k$ then
\begin{align*}
    \underline{\lambda}_\psi-\varepsilon
    &<-\frac{\log{\psi(q)}}{\log{q}} \\
    \psi(q)
    &<\frac{1}{q^{\underline{\lambda}_\psi-\varepsilon}}.
\end{align*}
Thus, $L(\psi)\subset L(\underline{\lambda}_\psi-\varepsilon)$. By Theorem~\ref{thm: DL 2} and the monotonicity of Hausdorff dimension,
\begin{align*}
    \dim{L(\psi)}
    \leq \dim{L(\underline{\lambda}_\psi-\varepsilon)}
    =\frac{1}{1+\underline{\lambda}_\psi-\varepsilon}
    .
\end{align*}
Since $\varepsilon>0$ is arbitrary, the upper bound is established.

Pick any $\varepsilon>0$. By the definition of $\overline{\lambda}_\psi$, there exists a strictly increasing sequence $(k_j)_{j\in\mathbb{N}}$ of positive integers such that for any $j\in\mathbb{N}$,
\begin{align*}
    \alpha_{j}\coloneqq\limsup_{q\in S_{k_j}}\frac{-\log{\psi(q)}}{\log{q}}<\overline{\lambda}_\psi+\frac{\varepsilon}{2}.
\end{align*}
For any $j\in\mathbb{N}$, by the definition of the limit superior, there exists $q_{k_j}\in S_{k_j}$ such that for any $q\in S_{k_j}$, if $q>q_{k_j}$ then
\begin{align*}
    \overline{\lambda}_\psi+\varepsilon
    >\alpha_{j}+\frac{\varepsilon}{2}
    >\frac{-\log{\psi(q)}}{\log{q}}.
\end{align*}
Thus, for any $\varepsilon>0$, there exists a strictly increasing sequence $(k_j)_{j\in\mathbb{N}}$ of positive integers such that for any $j\in\mathbb{N}$, there exists $q_{k_j}\in S_{k_j}$ such that for any $q\in S_{k_j}$, if $q>q_{k_j}$ then
\begin{align*}
    \frac{-\log{\psi(q)}}{\log{q}}<\overline{\lambda}_\psi+\varepsilon.
\end{align*}
With loss of generality, for any $j\in\mathbb{N}$, $q_{k_{j+1}}>q_{k_j}$. Define, for any $j\in\mathbb{N}$, $a_j\coloneqq\lceil{\log{q_{k_j}}/\log{2}}\rceil+1-k_j\geq1$, where $\lceil\,\rceil$ is the ceiling function. Then, for any $j\in\mathbb{N}$ and $q\in S_{k_j+a_j}\subset S_{k_j}$,  $q>q_{k_j}$ and implying
\begin{align}
    \label{eq: good k q}
    \frac{1}{q^{\overline{\lambda}_\psi+\varepsilon}}<\psi(q).
\end{align}
Thus, for any $j\in\mathbb{N}$ and $d_1,\ldots,d_{k_j+a_j}\in\mathbb{N}\setminus\{1\}$,
\begin{align*}
    \left(\frac{P}{Q},\frac{P}{Q}+\frac{1}{Q^{1+\overline{\lambda}_\psi+\varepsilon}}\right)
    \subset
    \left(\frac{P}{Q},\frac{P}{Q}+\frac{\psi(Q)}{Q}\right),
\end{align*}
where $Q=d_1(d_1-1)\cdots d_{k_j+a_j}\in S_{k_j+a_j}$ and $P=[d_1,\ldots,d_{k_j+a_j}]Q$. Thus,
\begin{align}
    \label{eq: Cylinder covered by blow-up}
    \left(\frac{P}{Q},\frac{P}{Q}+\frac{1}{Q}\right]
    \subset
    \left(\frac{P}{Q},\frac{P}{Q}+\frac{1}{Q^{1+\overline{\lambda}_\psi+\varepsilon}}\right)^{1/(1+\overline{\lambda}_\psi+\varepsilon)}
    \subset
    \left(\frac{P}{Q},\frac{P}{Q}+\frac{\psi(Q)}{Q}\right)^{1/(1+\overline{\lambda}_\psi+\varepsilon)}
\end{align}
where $B^s$ denotes the interval with the same centre as $B$ and with a radius equal to the radius of $B$ raised to the power of $s$. By Proposition \ref{prop: Luroth-Dirichlet} and \eqref{eq: Cylinder covered by blow-up}, for any $j\in\mathbb{N}$,
\begin{align}
    \label{eq: cover the whole}
    (0,1]
    \subset
    \bigcup_{d_1,\ldots,d_{k_j+a_j}\in\mathbb{N}\setminus\{1\}}
    \left(\frac{P}{Q},
    \frac{P}{Q}+\frac{\psi(Q)}{Q}\right)
    ^{1/(1+\overline{\lambda}_\psi+\varepsilon)}.
\end{align}
Let $((P_{k_i},Q_{k_i},d_{k_i}))_{i\in\mathbb{N}}$ be the sub-re-enumeration of $S=((P_k,Q_k,d_k))_{k\in\mathbb{N}}$ in \eqref{eq: S re-enumeration} so that for any $i\in\mathbb{N}$ and $q=Q_{k_i}$, \eqref{eq: good k q} is satisfied. Define a sequence of intervals $(B_{\psi,\varepsilon,i})_{i\in\mathbb{N}}$ by, for any $i\in\mathbb{N}$, 
\begin{align*}
    B_{\psi,\varepsilon,i}
    \coloneqq\left(\frac{P_{k_i}}{Q_{k_i}},\frac{P_{k_i}}{Q_{k_i}}+\frac{1}{{Q_{k_i}}^{1+\overline{\lambda}_\psi+\varepsilon}}\right)
    \cap\left(\frac{P_{k_i}}{Q_{k_i}},\frac{P_{k_i}}{Q_{k_i}}+\frac{1}{(d_{k_i}-1)Q_{k_i}}\right].
\end{align*}
Note that, for any $\varepsilon>0$,
\begin{align*}
    \limsup_{j\to+\infty}B_{\psi,\varepsilon,j}
    \subset
    L(\psi).
\end{align*}
By a straightforward computation, for any $i\in\mathbb{N}$,
\begin{align*}
    \left(\frac{P_{k_i}}{Q_{k_i}},\frac{P_{k_i}}{Q_{k_i}}+\frac{1}{Q_{k_i}}\right)\subset{B_{\psi,\varepsilon,j}}^{1/(1+\overline{\lambda}_\psi+\varepsilon)}
    .
\end{align*}

By \eqref{eq: cover the whole}, for any $j\in\mathbb{N}$, the intervals at the ${(k_j+a_j)}$-th level cover $(0,1]$. Thus \begin{align*}
    (0,1]\cap\limsup_{i\to+\infty}{B_{\psi,\varepsilon,i}}^{1/(1+\overline{\lambda}_\psi+\varepsilon)}=(0,1];
\end{align*}
in particular,
\begin{align*}
    \mathcal{L}\left([0,1]\cap 
    \limsup_{i\to+\infty}{B_{\psi,\varepsilon,i}}^{1/(1+\overline{\lambda}_\psi+\varepsilon)}\right)=1.
\end{align*}
By the sub-re-enumeration on $S$, $\lim_{i\to+\infty}Q_{k_i}=+\infty$ and consequently $\lim_{i\to+\infty}\operatorname{diam}{B_{\psi,\varepsilon,i}}=0$. By applying Proposition~\ref{prop: AMTP} and the monotonicity of Hausdorff dimension,
\begin{align*}
    \dim{L(\psi)
    }\geq\dim{\left(\limsup_{j\to+\infty}B_{\psi,\varepsilon,j}\right)}
    \geq\frac{1}{1+\overline{\lambda}_\psi+\varepsilon}.
\end{align*}
Since $\varepsilon>0$ is arbitrary, the lower bound is established.

The proof of Theorem~\ref{thm: DL 4} is completed.
\end{proof}

\subsection{Proof of Theorem~\ref{thm: DL 5}}

Theorem~\ref{thm: DL 5} presents a counterexample, and consequently suggests a reformulation, to the conjecture stated by Tan--Zhou \cite{tan2021approximation}. The proof constructs the function that takes small values at integers that are highly divisible by 2, and consequently does not satisfy the monotonicity condition.

\begin{proof}[Proof of Theorem~\ref{thm: DL 5}]
Let $\tau\geq0$. Define $\psi\coloneqq\psi_{\tau}:\mathbb{N}\to(0,1]$ by, for any $q\in\mathbb{N}$,
\begin{align*}
    \psi(q)=\frac{1}{q^{\tau+\nu_2(q)}},
\end{align*}
where $\nu_2$ is the 2-adic valuation. For any $n\in\mathbb{N}$, if $n>\tau+\sqrt{\tau}$ then
\begin{align*}
    \psi\left(2^n+1\right)=\frac{1}{(2^n+1)^\tau}\geq\frac{1}{2^{(\tau+1)n}}>\frac{1}{2^{(\tau+n)n}}=\psi(2^n).
\end{align*}
Thus, $\psi$ is not eventually non-increasing. By straightforward computation, $\underline{\tau}_\psi=\tau$ can be verified. It remains to prove that $\dim{L(\psi)}=0$.

For any $q,k\in\mathbb{N}$, if $2^k$ divides $q$ then $\nu_2(q)\geq k$ and
\begin{align}
    \label{eq: psi if 2k}
    0<\psi(q)\leq\frac{1}{q^{\tau+k}}.
\end{align}
Pick any $j\in\mathbb{N}$, $\rho>0$ and $N\in\mathbb{N}$. Suppose $N>-\log{\rho}/\log{2}$. The following forms a $\rho$-cover of $L(\psi)$:
\begin{align}
    \label{eq: rho cover DL 5}
    \bigcup_{n=N}^{\infty}\bigcup_{(d_1,\ldots,d_n)\in(\mathbb{N}\setminus\{1\})^n}\left(\frac{P_n}{Q_n},\frac{P_n}{Q_n}+\frac{\psi(Q_n)}{Q_n}\right),
\end{align}
where for any $n\in\mathbb{N}$, $Q_n\coloneqq d_1(d_1-1)\cdots d_{n-1}(d_{n-1}-1)d_n\geq2^n$ and $P_n\coloneqq [d_1,d_2,\ldots,d_n]Q_n$. Pick any $s>1/(1+\tau+j)$. Define
\begin{align*}
    r_{\tau,j,s}
    &\coloneqq\sum_{d=2}^\infty\left(\frac{1}{d(d-1)}\right)^{(1+\tau+j)s}\in(0,1); \\
    C_{\tau,j,s}
    &\coloneqq\sum_{d=2}^\infty\frac{1}{{d}^{(1+\tau+j)s}}<+\infty.
\end{align*}

Pick any $\varepsilon>0$. Without loss of generality, $N>j$ and
\begin{align*}
    C_{\tau,j,s}\sum_{n=N}^{\infty}{r_{\tau,j,s}}^{n-1}<\varepsilon.
\end{align*}
Note that for any $n\in\mathbb{N}$ and $d_1,d_2,\ldots,d_n\in\mathbb{N}\setminus\{1\}$, if $n\geq N$ then $n>j$ and $2^j\mid d_1(d_1-1)\cdots d_{n-1}(d_{n-1}-1)d_n$. By applying \eqref{eq: psi if 2k}, the $s$-Hausdorff measure for the $\rho$-cover \eqref{eq: rho cover DL 5} is obtained as
\begin{align*}
    \mathcal{H}^s_{\rho}(L(\psi))
    &\leq
    \sum_{n=N}^{\infty}\sum_{(d_1,\ldots,d_n)\in(\mathbb{N}\setminus\{1\})^n}\left(\frac{\psi(d_1(d_1-1)\cdots d_n)}{d_1(d_1-1)\cdots d_n}\right)^s \\
    &\leq\sum_{n=N}^{\infty}\sum_{(d_1,\ldots,d_n)\in(\mathbb{N}\setminus\{1\})^n}\left(\frac{1}{(d_1(d_1-1)\cdots d_n)^{1+\tau+j}}\right)^s \\
    &=\sum_{n=N}^{\infty}\left(\sum_{d=2}^\infty\left(\frac{1}{d(d-1)}\right)^{(1+\tau+j)s}\right)^{n-1}\sum_{d=2}^\infty\frac{1}{{d}^{(1+\tau+j)s}} \\
    &=C_{\tau,j,s}\sum_{n=N}^{\infty}{r_{\tau,j,s}}^{n-1}<\varepsilon.
\end{align*}
Hence, $\mathcal{H}^s(L(\psi))=0$ and consequently for any $j\in\mathbb{N}$, 
\begin{align*}
    \dim{L(\psi)}\leq\frac{1}{1+\tau+j}.
\end{align*}

The proof of Theorem~\ref{thm: DL 5} is completed.
\end{proof}

\subsection{Proof of Theorem~\ref{thm: DL 6}}

Theorem~\ref{thm: DL 6} provides a partial result to the revised conjecture. By Theorem~\ref{thm: DL 3}, it suffices to prove the following lower bound:
\begin{align*}
    \dim{L(\psi)}\geq\frac{1}{1+\underline{\tau}_\psi}.
\end{align*}
The idea is to apply the divergent case of Theorem~\ref{thm: DL 1} to deduce that $\mathcal{L}(\theta_s)=1$, for the auxiliary function $\theta_s$ defined in \eqref{eq: theta s}. By applying Proposition~\ref{prop: AMTP} again, the lower bound is then established.

By the assumption $\underline{\tau}_{\psi}>1$, the set $L(\psi)$ can be expressed as a $\limsup$ set of intervals. Proposition~\ref{prop: good tau limsup} can be compared with Proposition~\ref{prop: Luroth, limsup}.
\begin{prop}\label{prop: good tau limsup}
    Let $\psi:\mathbb{N}\to(0,1]$. Suppose $\underline{\tau}_{\psi}>1$. Then
    \begin{align*}
        L(\psi)=\limsup_{k\to+\infty}\left(\frac{P_k}{Q_k},\frac{P_k}{Q_k}+\frac{\psi(Q_k)}{Q_k}\right),
    \end{align*}
    where $((P_k,Q_k,d_k))_{k\in\mathbb{N}}$ is the re-enumeration of $S$ in \eqref{eq: S re-enumeration}.
\end{prop}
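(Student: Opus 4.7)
The plan is to establish the two set inclusions of the claimed equality separately. The forward inclusion $L(\psi)\subset\limsup_{k\to+\infty}(P_k/Q_k,\,P_k/Q_k+\psi(Q_k)/Q_k)$ should follow without invoking the hypothesis on $\underline{\tau}_{\psi}$. Indeed, Proposition~\ref{prop: Luroth-Dirichlet} forces every L\"uroth convergent to approximate $x$ strictly from the left, so combining this with the defining inequality of $L(\psi)$ gives $0 < x - P_n(x)/Q_n(x) < \psi(Q_n(x))/Q_n(x)$ for infinitely many $n$ whenever $x\in L(\psi)$. Translating each triple $(P_n(x),Q_n(x),d_n(x))$ into its index $k$ via the re-enumeration \eqref{eq: S re-enumeration} then places $x$ in infinitely many of the intervals appearing on the right-hand side, which is exactly membership in the limsup set.

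For the reverse inclusion the natural tool is Proposition~\ref{Luroth, supset}, which converts a one-sided approximation of width at most $1/((d_k-1)Q_k)$ into the conclusion that $P_k/Q_k$ is a genuine L\"uroth convergent of $x$. Given $x$ in the limsup set, the inequality $0 < x - P_k/Q_k < \psi(Q_k)/Q_k$ holds for infinitely many $k$, so the task reduces to verifying that, from some index on,
\begin{align*}
    \psi(Q_k) \leq \frac{1}{d_k-1}.
\end{align*}
Once this is in place, Proposition~\ref{Luroth, supset} makes $P_k/Q_k$ a L\"uroth convergent of $x$ for those same $k$, and the defining inequality of $L(\psi)$ is then fulfilled along that subsequence.

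The hypothesis $\underline{\tau}_{\psi} > 1$ enters precisely at the verification of the displayed bound, and this is the only place in the argument that requires any care. Fix any $\tau_0\in(1,\underline{\tau}_{\psi})$; the definition of lower order gives $\psi(q)\leq q^{-\tau_0}$ for all sufficiently large $q$. Using the elementary estimate $Q_k \geq d_k \geq 2$ coming directly from \eqref{eq: Q_n(x)}, one then gets $\psi(Q_k) \leq Q_k^{-\tau_0} \leq d_k^{-\tau_0} \leq 1/(d_k-1)$, where the last step uses $\tau_0 > 1$ together with $d_k \geq 2$. Since $Q_k \to +\infty$ along the re-enumeration, the required bound fails for at most finitely many $k$, which is harmless for a limsup statement.

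The main (and essentially only) obstacle is this quantitative comparison between $\psi(Q_k)$ and $1/(d_k-1)$. The threshold $\underline{\tau}_{\psi}>1$ is tight for the argument as formulated: any exponent $\tau_0\leq 1$ would fail to absorb the factor $d_k-1$ through the estimate $Q_k \geq d_k$ alone, so the hypothesis cannot be weakened without a finer control on the joint distribution of $d_k$ and $Q_k$ along the re-enumeration.
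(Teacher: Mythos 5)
Your proposal is correct and follows essentially the same route as the paper's proof: the forward inclusion via Proposition~\ref{prop: Luroth-Dirichlet} and the re-enumeration, and the reverse inclusion via Proposition~\ref{Luroth, supset} after using $\underline{\tau}_{\psi}>1$ to bound $\psi(Q_k)/Q_k$ by $1/((d_k-1)Q_k)$ for large $k$. The paper reaches that bound slightly more directly with $\psi(Q_k)/Q_k < 1/Q_k^2 < 1/((d_k-1)Q_k)$ (using $Q_k \geq d_k$), whereas you route through $Q_k^{-\tau_0} \leq d_k^{-\tau_0} \leq 1/(d_k-1)$; note that $\tau_0=1$ already suffices for the last step since $1/d_k < 1/(d_k-1)$, so the genuine role of $\underline{\tau}_{\psi}>1$ is only to guarantee $\psi(q)<1/q$ eventually, not that a strictly larger exponent is required in the chain.
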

\begin{proof}
    The inclusion $\subset$ follows directly from the definition of $S$ and \eqref{eq: L tau better}. It remains to prove the reverse inclusion $\supset$.

    By the assumption that $\underline{\tau}_{\psi}>1$, there exists $q_0\in\mathbb{N}$ such that for any $q\in\mathbb{N}$, if $q>q_0$ then 
    \begin{align}
        \label{eq: tau>1}
        0<\psi(q)<\frac{1}{q}.
    \end{align}
    Pick any $x\in\limsup_{k\to+\infty}({P_k}/{Q_k},{P_k}/{Q_k}+{\psi(Q_k)}/{Q_k})$. By limit superior, there exists a strictly increasing sequence $(k_j)_{j\in\mathbb{N}}$ of positive integers such that for any $j\in\mathbb{N}$, $x\in({P_{k_j}}/{Q_{k_j}},{P_{k_j}}/{Q_{k_j}}+{\psi(Q_{k_j})}/{Q_{k_j}})$.
    Since $(Q_k)_{k\in\mathbb{N}}$ is non-decreasing, $\lim_{k\to\infty}Q_k=+\infty$ and there exists $j_0\in\mathbb{N}$ such that $Q_{k_{j_0}}>q_0$. For any $j\in\mathbb{N}$, if $j>j_0$ then $Q_{k_j}\geq Q_{k_{j_0}}>q_0$ and \eqref{eq: tau>1} implies that 
    \begin{align*}
        0<x-\frac{P_{k_j}}{Q_{k_j}}<\frac{\psi(Q_{k_j})}{Q_{k_j}}<\frac{1}{{Q_{k_j}}^2}<\frac{1}{(d_{k_j}-1)Q_{k_j}}.
    \end{align*}
    By Proposition~\ref{Luroth, supset}, $P_{k_j}/Q_{k_j}$ is a L\"uroth convergent of $x$.
\end{proof}

\begin{proof}[Proof of Theorem~\ref{thm: DL 6}]
By the assumption that $\underline{\tau}_{\psi}>1$, there exists $q_0\in\mathbb{N}$ such that for any $q\in\mathbb{N}$, if $q\geq q_0$ then \eqref{eq: tau>1} holds. Thus,
\begin{align*}
    \sum_{q=q_0}^{\infty}-\frac{\theta_s(q)\log{\theta_s(q)}}{q}
    &=\sum_{q=q_0}^{\infty}-\frac{q^{1-s}\psi^s(q)\log{\left(q^{1-s}\psi^s(q)\right)}}{q}\\
    &\geq \sum_{q=q_0}^{\infty}\left(\frac{\psi(q)}{q}\right)^s\log{q} \\
    &=+\infty
    .
\end{align*}
By the divergent case of Theorem~\ref{thm: DL 1},
\begin{align}
    \label{eq: thm 1 gives full measure}
    \mathcal{L}(L(\theta_s))=1.
\end{align} 

By the assumption $\underline{\tau}_{\psi}>1$ and Proposition~\ref{prop: good tau limsup}, 
\begin{align*}
    L(\psi)=\limsup_{k\to+\infty}\left(\frac{P_k}{Q_k},\frac{P_k}{Q_k}+\frac{\psi(Q_k)}{Q_k}\right)
    .
\end{align*}
By a straightforward computation,
\begin{align*}
    L(\theta_s)
    =\limsup_{k\to+\infty}\left(\frac{P_k}{Q_k},\frac{P_k}{Q_k}+\frac{\theta_s(Q_k)}{Q_k}\right)
    \subset\limsup_{k\to+\infty}\left(\frac{P_k}{Q_k},\frac{P_k}{Q_k}+\frac{\psi(Q_k)}{Q_k}\right)^s
    .
\end{align*}
By \eqref{eq: thm 1 gives full measure} and the monotonicity of Lebesgue measure,
\begin{align*}
    \mathcal{L}{\left([0,1]\cap\limsup_{k\to+\infty}\left(\frac{P_k}{Q_k},\frac{P_k}{Q_k}+\frac{\psi(Q_k)}{Q_k}\right)^s\right)}=1.
\end{align*}
By $\lim_{k\to+\infty}Q_k=+\infty$, $\lim_{k\to+\infty}\operatorname{diam}{({P_k}/{Q_k},{P_k}/{Q_k}+{\psi(Q_k)}/{Q_k})}=0$ is obtained. By applying Proposition~\ref{prop: AMTP},
\begin{align*}
    \dim(L(\psi))
    =
    \dim{\left(\limsup_{k\to+\infty}\left(\frac{P_k}{Q_k},\frac{P_k}{Q_k}+\frac{\psi(Q_k)}{Q_k}\right)\right)}
    \geq s=\frac{1}{1+\underline{\tau}_{\psi}}.
\end{align*}

The proof of Theorem~\ref{thm: DL 6} is completed.
\end{proof}

\bibliographystyle{siam}
\bibliography{name}

\end{document}